\newif\ifpaper
\paperfalse

\ifpaper

	%
	%
	%


	\documentclass{mcom-l}

	\usepackage{amsmath,amssymb,amsfonts,amsthm,thmtools}

	\usepackage[pdftex]{graphicx} 


	\copyrightinfo{2009}{American Mathematical Society}

	\newtheorem{theorem}{Theorem}[section]
	\newtheorem{lemma}[theorem]{Lemma}

	\theoremstyle{definition}
	\newtheorem{definition}[theorem]{Definition}
	\newtheorem{example}[theorem]{Example}

	\theoremstyle{remark}
	\newtheorem{remark}[theorem]{Remark}
	\newtheorem{corollary}[theorem]{Corollary}
	\newtheorem{lp}{\normalfont{LP}}

	\numberwithin{equation}{section}

\else

	\documentclass[11pt,a4paper,oneside]{article}

	\usepackage{amsmath,amssymb,amsfonts,amsthm,thmtools}
	\usepackage[pdftex]{graphicx} 
	\usepackage[hmargin=1.0in,bottom=1.8in,footskip=1.0in]{geometry}
	\usepackage[usenames]{color} 
	\usepackage[sc]{mathpazo} \linespread{1.05} 
	\usepackage[textsize=footnotesize]{todonotes}
	\usepackage{sectsty}
	\usepackage[
		style=numeric-comp,
		hyperref=true,
		backend=bibtex,
		doi=false,
		url=false,
		isbn=false,
		backref=false,
		firstinits=true,
		sorting=nyt,
		maxcitenames=3,
		maxbibnames=100,
		block=none]{biblatex}
	\definecolor{lightgray}{gray}{0.5}
	\definecolor{darkblue}{rgb}{0.,0.,0.5}
	\definecolor{darkred}{rgb}{0.6,0.,0.}
	\usepackage[linktocpage=true,colorlinks=true,linkcolor=blue,citecolor=red,urlcolor=blue]{hyperref}


	\pagestyle{plain}

	\renewcommand{\newline}{\vspace{10pt}}

	\declaretheorem[style=plain,numberwithin=section]{theorem}
	\declaretheorem[style=plain,numberwithin=section]{lemma}
	
	\declaretheorem[style=plain,numberwithin=section]{corollary}
	\declaretheorem[style=definition,numberwithin=section]{definition}
	\declaretheorem[style=definition,numberwithin=section]{example}
	\declaretheorem[style=definition,numberwithin=section]{remark}
	\declaretheorem[style=definition,name=LP]{lp}
	\numberwithin{equation}{section}
	\numberwithin{table}{section}
	\numberwithin{figure}{section}

	\sectionfont{\raggedright}

	\DeclareNameAlias{default}{last-first}
	\newbibmacro*{string+doiurlisbn}[1]{
		\iffieldundef{doi}{
			\iffieldundef{url}{
				\iffieldundef{isbn}{
					\iffieldundef{issn}{#1}
						{\href{http://books.google.com/books?vid=ISSN\thefield{issn}}{#1}}}
						{\href{http://books.google.com/books?vid=ISBN\thefield{isbn}}{#1}}}
						{\href{\thefield{url}}{#1}}}
						{\href{http://dx.doi.org/\thefield{doi}}{#1}}
	}
	\DeclareFieldFormat*{title}{\usebibmacro{string+doiurlisbn}{#1\isdot}}
	
	\renewbibmacro{in:}{}
	\AtEveryBibitem{\clearfield{number}}

	\addbibresource{bibliography.bib}

\fi

	\usepackage{textcomp}
	\usepackage[T3,T1]{fontenc}
	\usepackage{bm}
	\usepackage{enumerate}
	\usepackage{paralist}
	\usepackage{enumitem}
	\usepackage{url}
	\usepackage{array}
	\usepackage{arydshln}
	\usepackage{subfigure}
	\usepackage{caption} 
	\usepackage{verbatim}

	\graphicspath{{figures/}}

	\newcommand{\rk}{Runge--Kutta method }
	\newcommand{\ark}{additive \rk}
	
	\newcommand{\lmm}{linear multistep method }
	\newcommand{\almm}{additive LMM }
	\newcommand{\plmm}{perturbed LMM }
	
	\renewcommand{\u}{\bm{u}}
	
	\newcommand{\U}{\bm{U}}
	\newcommand{\f}{\bm{f}}
	\newcommand{\F}{\bm{F}}
	
	\newcommand{\tildeF}{\widetilde{\F}}
	\newcommand{\hatF}{\widehat{\F}}

	\newcommand{\R}{\mathbb{R}}
	
	\newcommand{\sspcoef}{\mathcal{C}}
	\newcommand{\tildesspcoef}{\widetilde{\mathcal{C}}}
	\newcommand{\hatsspcoef}{\widehat{\mathcal{C}}}

	\newcommand{\Dx}{\Delta x}
	\newcommand{\Dt}{\Delta t}
	\newcommand{\DtFE}{\Delta t_{\textnormal{FE}}}
	\newcommand{\Dtmax}{\Delta t_{\textnormal{max}}}
	\newcommand{\hatDtFE}{\widehat{\Delta t}_{\textnormal{FE}}}
	\newcommand{\tildeDtFE}{\widetilde{\Delta t}_{\textnormal{FE}}}

	\newcommand{\bv}[3]{\bm{b}^#1_{#2}\mkern-4mu\left(\mbox{\large $#3$}\right)}
	\newcommand{\Det}[1]{\textsc{det}\left(#1,\bm{a}_{i_1}, \dots, \bm{a}_{i_m},
						\bv{+}{j_1}{\frac{1}{\raisebox{-0.4ex}{$\scriptstyle\sspcoef$}}}\!, \dots,
						\bv{+}{j_n}{\frac{1}{\raisebox{-0.4ex}{$\scriptstyle\sspcoef$}}}\!,
						\bv{-}{l_1}{\frac{1}{\raisebox{-0.6ex}{$\scriptstyle\tildesspcoef$}}}\!, \dots,
						\bv{-}{l_s}{\frac{1}{\raisebox{-0.6ex}{$\scriptstyle\tildesspcoef$}}}\right)}
	\newcommand{\inset}[2]{\in \{#1, \dots, #2\}}
	
	\ifpaper
		\newcommand{\class}[1]{#1\unskip s}
	\else
		\newcommand{\class}[1]{#1\texorpdfstring{\unskip s}{s}}
	\fi

	\DeclareSymbolFont{tipa}{T3}{cmr}{m}{n}
	\DeclareMathAccent{\invbreve}{\mathalpha}{tipa}{16}

\begin{document}

\ifpaper

	\title[SSP ADDITIVE LMM\MakeLowercase{s}]{Strong-stability-preserving additive linear multistep methods}


	\author[Y. HADJIMICHAEL]{Yiannis Hadjimichael}
	\address{4700 King Abdullah University of Science and Technology (KAUST), Thuwal, 23955-6900, Saudi
	Arabia.}
	\curraddr{}
	\email{yiannis.hadjimichael@kaust.edu.sa}
	\thanks{This work was supported by the King Abdullah University of Science and Technology (KAUST),
	4700 Thuwal, 23955-6900, Saudi Arabia.}

	\author[D. I. KETCHESON]{David I. Ketcheson}
	\address{4700 King Abdullah University of Science and Technology (KAUST), Thuwal, 23955-6900, Saudi
	Arabia.}
	\curraddr{}
	\email{david.ketcheson@kaust.edu.sa}
	\thanks{}

	\subjclass[2010]{Primary, 65L06; Secondary, 65L05, 65M20}

	\date{}

	\dedicatory{}

\else

	\title{Strong-stability-preserving additive linear multistep methods}
	\author{Yiannis Hadjimichael \and David I. Ketcheson\footnote{Author email addresses:
	\texttt{\{yiannis.hadjimichael, david.ketcheson\}@kaust.edu.sa}.
	This work was supported by the King Abdullah University of Science and Technology (KAUST), 4700
	Thuwal,23955-6900, Saudi Arabia.}}

	\maketitle

\fi

\begin{abstract}
	The analysis of strong-stability-preserving (SSP) linear multistep methods is extended to
	semi-discretized problems for which different terms on the right-hand side satisfy different forward
	Euler (or circle) conditions.
	Optimal additive and perturbed monotonicity-preserving linear multistep methods are studied in the
	context of such problems.
	Optimal perturbed methods attain larger monotonicity-preserving step sizes when the different forward
	Euler conditions are taken into account.
	On the other hand, we show that optimal SSP additive methods achieve a monotonicity-preserving
	step-size restriction no better than that of the corresponding non-additive SSP linear multistep
	methods.
\end{abstract}

\ifpaper\maketitle\fi

\section{Introduction}\label{sec:Introduction}
We are interested in numerical solutions of initial value ODEs 
\begin{equation}\label{eq:ODE}
	\begin{split}
		\u'(t) &= \F(\u(t)), \quad t \ge t_0 \\
		\u(t_0) &= \u_0,
	\end{split}
\end{equation}
where $\F : \R^m \rightarrow \R^m$ is a continuous function and 
$\u :[t_0,\infty) \rightarrow \R^m$ satisfies a monotonicity property
\begin{align}\label{eq:MonotonicityExact}
	\|\u(t + \Dt)\| \le \|\u(t)\|, \quad \forall \Dt \ge 0,
\end{align}
with respect to some norm, semi-norm or convex functional $\|\cdot\| : \R^m \rightarrow \R$.
In general $\F(\u(t))$ may arise from the spatial discretization of partial differential equations; for
example, hyperbolic conservation laws.
A sufficient condition for monotonicity is that there exists some $\DtFE > 0$
such that the forward Euler condition
\begin{align}\label{eq:FECond}
	\|\u + \Dt\F(\u)\| \leq \|\u\|, \quad  0 \leq \Dt \leq \DtFE,
\end{align}
holds for all $\u \in \R^m$.

In this paper we focus on \class{\lmm} (LMMs) for the numerical integration of \eqref{eq:ODE}.
We denote by $\u_n$ the numerical approximation to $\u(t_n)$, evaluated sequentially at times
$t_n = t_0 + n\Dt$, $n \ge 1$.
At step $n$, a $k$-step \lmm applied to \eqref{eq:ODE} takes the form
\begin{align}\label{eq:LMM}
	\u_n = \sum_{j=0}^{k-1}\alpha_j\u_{n-k+j} + \Dt\sum_{j=0}^k\beta_j\F(\u_{n-k+j})
\end{align}
and if $\beta_k = 0$, then the method is explicit.

We would like to establish a discrete analogue of \eqref{eq:MonotonicityExact} for the numerical solution
$\u_n$ in \eqref{eq:LMM}.
Assuming $\F$ satisfies the forward Euler condition \eqref{eq:FECond} and all
$\alpha_j, \beta_j$ are non-negative, then convexity of $\|\cdot\|$ and the
consistency requirement $\sum_{j=0}^{k-1}\alpha_j =  1$ imply that
$\|\u_n\|\le \max_j\|\u_{n-k+j}\|$ whenever
$\Dt\beta_j/\alpha_j \le \DtFE$ for all $j$.
Hence, the monotonicity condition 
\begin{align}\label{eq:Monotonicity}
	\|\u_n\| \le \max\{\|\u_{n-1}\|, \dots, \|\u_{n-k}\|\}.
\end{align}
is satisfied under a step-size restriction
\begin{align}\label{eq:ts-restriction}
	\Dt \le \sspcoef_{\text{LMM}} \DtFE,
\end{align}
where $\sspcoef_{\text{LMM}} = \min_j \alpha_j/\beta_j$.
The ratio $\alpha_j/\beta_j$ is taken to be infinity if $\beta_j = 0$.
See \cite[Chapter~8]{Gottlieb:Ketcheson:Shu:2011:SSPbook} and
references therein for a review of strong-stability-preserving \class{\lmm} (SSP LMMs).

Most LMMs have one or more negative coefficients, so the foregoing analysis
leads to $\sspcoef_\text{LMM}=0$ and thus monotonicity condition \eqref{eq:Monotonicity} cannot be
guaranteed by positive step sizes.
However, typical numerical methods for hyperbolic conservation laws
$\U_t + \nabla\!\cdot\!\f(\U) = 0$ involve
upwind-biased semi-discretizations of the spatial derivatives.
In order to preserve monotonicity using methods with negative coefficients for
such semi-discretizations, downwind-biased spatial approximations may be used.
Let $\F$ and $\tildeF$ be respectively upwind- and downwind-biased approximations of
$-\nabla\!\cdot\!\f(\U)$.
It is natural to assume that $\tildeF$ satisfies
\begin{align}\label{eq:DFECond}
	\|\u - \Dt\F(\u)\| \leq \|\u\|, \quad  0 \leq \Dt \leq \DtFE,
\end{align}
for all $\u \in \R^m$.
A linear multistep method that uses both $\F$ and $\tildeF$ can be then written as
\begin{align}\label{eq:DLMM}
	\u_n = \sum_{j=0}^{k-1}\alpha_j\u_{n-k+j} +
        \Dt\sum_{j=0}^{k}\left(\beta_j\F(\u_{n-k+j}) - \tilde{\beta}_j\tildeF(\u_{n-k+j})\right).
\end{align}
If all $\alpha_j$ are non-negative, then the method is monotonicity preserving
under the restriction \eqref{eq:ts-restriction} where the SSP coefficient is
now $\tildesspcoef_{\text{LMM}} = \min_j \alpha_j/(\beta_j + \tilde{\beta}_j)$ with $\beta_j$,
$\tilde{\beta}_j$ non-negative; see \cite[Chapter~10]{Gottlieb:Ketcheson:Shu:2011:SSPbook} and the
references therein.

Downwind LMMs were originally introduced in
\cite{Shu:1988:TVD,Shu/Osher:1988:ENO}, with the idea that
$\F$ be replaced by $\tildeF$ whenever $\beta_j < 0$.
Optimal explicit linear multistep schemes of order up to six, coupled with efficient upwind and downwind
WENO discretizations, were studied in \cite{Gottlieb/Ruuth:2006:SSPDownwind}.
Coefficients of optimal upwind- and downwind-biased methods together with a reformulation of the nonlinear
optimization problem involved as a series of linear programming feasibility problems can be found in
\cite{Ketcheson:2009:OptimalMonotonicityGLM}.
Bounds on the maximum SSP step size for downwind-biased methods have been analyzed in
\cite{Ketcheson:2011:StepSizesDownwind}.

Method \eqref{eq:DLMM} can also be written in the perturbed form
\begin{align}\label{eq:PLMM}
    \u_n = \sum_{j=0}^{k-1}\alpha_j\u_{n-k+j} +
        \Dt\sum_{j=0}^k \left(\invbreve{\beta}_j\F(\u_{n-k+j}) +
        \tilde{\beta}_j\left(\F(\u_{n-k+j}) - \tildeF(\u_{n-k+j})\right)\right),
\end{align}
where $\invbreve{\beta}_j = \beta_j - \tilde{\beta}_j$.  We say method
\eqref{eq:PLMM} is a perturbation of the LMM \eqref{eq:LMM} with coefficients $\invbreve{\beta}_j$, and
the latter is referred to as the {\em underlying method} for \eqref{eq:PLMM}.
By replacing  $\tildeF$ with $\F$ in \eqref{eq:PLMM} one recovers the underlying method.
The notion of a perturbed method can be useful beyond the realm of downwinding for hyperbolic
PDE semi-discretizations.  If $\F$ satisfies the forward Euler condition
\eqref{eq:FECond} for both positive and negative step sizes, then we
can simply take $\tildeF = \F$.
In such cases, the perturbed and underlying methods are the same, but
analysis of a perturbed form of the
method can yield a larger step size for monotonicity, giving more
accurate insight into the behavior of the method.  See \cite{Higueras2010} for a
discussion of this in the context of Runge--Kutta methods, and see Example
\ref{ex:ExODE} herein for an example using multistep methods.
As we will see in Section~\ref{sec:DLMM}, the most useful \class{\plmm} \eqref{eq:PLMM} take a
form in which either $\beta_j$ or $\tilde{\beta}_j$ is equal to zero for each value of $j$.
Thus $\tildesspcoef_{\text{LMM}} = \min_j \{\alpha_j/\beta_j,\alpha_j/\tilde{\beta}_j\}$, and the
class of \class{\plmm} \eqref{eq:PLMM} coincides with the class of downwind LMMs in
\cite{Shu:1988:TVD,Shu/Osher:1988:ENO}.

In this work, we adopt form \eqref{eq:DLMM} for \class{\plmm} and consider
their application to the more general class of problems \eqref{eq:ODE} for
which $\F$ and $\tildeF$ satisfy forward Euler conditions under different
step-size restrictions:
\begin{subequations}\label{eq:DLMMDiffFE}
\begin{align}
	\|\u + \Dt\F(\u)\| \leq \|\u\|, &\quad \forall \u \in \R^m, \; 0 \leq \Dt \leq \DtFE
	\label{eq:DLMMDiffFEa} \\
	\|\u - \Dt\tildeF(\u)\| \leq \|\u\|, &\quad \forall \u \in \R^m, \; 0 \leq \Dt \leq \tildeDtFE.
	\label{eq:DLMMDiffFEb}
\end{align}
\end{subequations}
For a fixed order of accuracy and number of steps, an optimal SSP method is defined to be any method that
attains the largest possible SSP coefficient.
The choice of optimal monotonicity-preserving method for a given problem will
depend on the ratio $y = \DtFE/\tildeDtFE$.
We analyze and construct such optimal methods.
We illustrate by examples that \class{\plmm} with larger step sizes for monotonicity
can be obtained when the different step sizes in \eqref{eq:DLMMDiffFE} are
accounted for.

The perturbed methods \eqref{eq:DLMM} are reminiscent of additive methods, and
the latter can be analyzed in a similar way.  Consider the problem
\begin{align*}
    \u'(t) = \F(\u(t)) + \hatF(\u(t))
\end{align*}
where $\F$ and $\hatF$ may represent different physical processes, such
as convection and diffusion or convection and reaction.
Additive methods are expressed as
\begin{align*}
	\u_n = \sum_{j=0}^{k-1}\alpha_j\u_{n-k+j} +
	\Dt\sum_{j=0}^k\left(\beta_j\F(\u_{n-k+j}) + \hat{\beta}_j\hatF(\u_{n-k+j})\right),
\end{align*}
where $\F$ and $\hatF$ may satisfy the forward Euler condition \eqref{eq:FECond} under possibly different
step-size restrictions.
We prove that optimal SSP explicit or implicit additive methods have coefficients
$\beta_j = \hat{\beta}_j$ for all $j$, hence they lie within the class of
ordinary (not additive) LMMs.

The rest of the paper is organized as follows.
In Section~\ref{sec:DLMM} we analyze the monotonicity properties of \class{\plmm} for which
the upwind and downwind operators satisfy different forward Euler conditions.
Optimal methods are derived, and their properties are discussed.
Their effectiveness is illustrated by some examples.
Additive \class{\lmm} are presented in Section~\ref{sec:ALMM} where we prove that optimal SSP
\class{\almm} are equivalent to the corresponding non-additive SSP LMMs.
Monotonicity of IMEX \class{\lmm} is discussed, and finally in Section~\ref{sec:Conclusion} we summarize
the main results.

\section{Monotonicity-preserving perturbed \class{\lmm}} \label{sec:DLMM}
The following example shows that using upwind- and downwind-biased operators allows the construction
of methods that have positive SSP coefficients, even though the underlying methods are not SSP.
\begin{example}\label{ex:DLMM}
	Let $u'(t) = F(u(t))$ be a semi-discretization of $u_t + f(u)_x = 0$, where $F \approx -f(u)_x$.
	Consider the two-step, second-order explicit linear multistep method
	\begin{align}\label{eq:LMM32}
		u_n = \frac{1}{2}u_{n-2} - \frac{1}{4}\Dt F(u_{n-2}) + \frac{1}{2}u_{n-1} +
			\frac{7}{4}\Dt F(u_{n-1}).
	\end{align}
	The method has SSP coefficient equal to zero.
	Let us introduce a downwind-biased operator $\widetilde{F} \approx -f(u)_x$ such that
	\eqref{eq:DFECond} is satisfied.
	Then, a perturbed representation of \eqref{eq:LMM32} is
	\ifpaper
	\begin{align}\label{eq:PLMM32}
	\begin{split}
		u_n =& \frac{1}{2}u_{n-2} + \frac{1}{4}\Dt F(u_{n-2}) - \frac{1}{2}\Dt\widetilde{F}(u_{n-2}) + \\
			&\frac{1}{2}u_{n-1} + 2\Dt F(u_{n-1}) - \frac{1}{4}\Dt\widetilde{F}(u_{n-1}),
	\end{split}
	\end{align}
	\else
	\begin{align}\label{eq:PLMM32}
		u_n = \frac{1}{2}u_{n-2} + \frac{1}{4}\Dt F(u_{n-2}) - \frac{1}{2}\Dt\widetilde{F}(u_{n-2}) +
			\frac{1}{2}u_{n-1} + 2\Dt F(u_{n-1}) - \frac{1}{4}\Dt\widetilde{F}(u_{n-1}),
	\end{align}
	\fi
	in the sense that the underlying method \eqref{eq:LMM32} is retrieved from \eqref{eq:PLMM32} by
	replacing $\widetilde{F}$ with $F$.
	The perturbed method has SSP coefficient $\tildesspcoef_{\text{LMM}} = 2/9$.
	There are infinitely many perturbed representations of \eqref{eq:LMM32}, but an optimal one is
	obtained by simply replacing $F$ with $\widetilde{F}$ in \eqref{eq:LMM32}, yielding
	\begin{align}\label{eq:DLMM32}
		u_n = \frac{1}{2}u_{n-2} - \frac{1}{4}\Dt\widetilde{F}(u_{n-2}) + \frac{1}{2}u_{n-1} +
					\frac{7}{4}\Dt F(u_{n-1}),
	\end{align}
	with SSP coefficient $\tildesspcoef_{\text{LMM}} = 2/7$.
\end{example}
\begin{remark}
        A LMM \eqref{eq:LMM} has SSP coefficient $\sspcoef=0$ if any of the
        following three conditions hold:
        \begin{enumerate}
            \item $\alpha_j<0$ for some $j$;
            \item $\beta_j<0$ for some $j$;
            \item $\alpha_j=0$ for some $j$ for which $\beta_j\ne 0$.
        \end{enumerate}
        By introducing a downwind operator we can remedy the second condition, but
        not the first or the third.  Most common methods, including the Adams--Bashforth,
        Adams--Moulton, and BDF methods, satisfy condition 1 or 3, so they cannot be
        made SSP via downwinding.
\end{remark}

We consider a generalization of the \class{\plmm} described previously, by assuming different forward
Euler conditions for the operators $\F$ and $\tildeF$ (see \eqref{eq:DLMMDiffFE}).
\begin{definition}\label{def:DLMMMonotonicity}
	A \plmm of the form \eqref{eq:DLMM} is said to be \textit{strong-stability-preserving}
	(SSP) with SSP coefficients $(\sspcoef,\tildesspcoef)$ if conditions
	\begin{align}\label{eq:DLMMMonCondDiffFE}
	\begin{split}
		\beta_j, \tilde{\beta}_j \geq 0, &\quad j \inset{0}{k}, \\
		\alpha_j - r\beta_j - \tilde{r}\tilde{\beta}_j \geq 0, &\quad j \inset{0}{k-1},
	\end{split}
	\end{align}
	hold for all $ 0 \le r \le \sspcoef$ and $0 \le \tilde{r} \le \tildesspcoef$.
\end{definition}
\noindent By plugging the exact solution in \eqref{eq:DLMM}, setting $\tildeF(\u(t_n)) = \F(\u(t_n))$ and
taking Taylor expansions around $t_{n-k}$, it can be shown that a \plmm is order $p$ accurate if
\begin{align}\label{eq:DLMMOrderCond}
\begin{gathered}
	\sum_{j=0}^{k-1}\alpha_j = 1, \qquad \sum_{j=0}^{k-1} j\alpha_j +
	\sum_{j=0}^k(\beta_j -\tilde{\beta}_j) = k, \\
	\sum_{j=0}^{k-1}\alpha_j j^i + \sum_{j=0}^k(\beta_j -\tilde{\beta}_j) i j^{i-1} = k^i, \quad
	i \inset{2}{p}.
\end{gathered}
\end{align}

The step-size restriction for monotonicity of an SSP \plmm is given by the following theorem.
\begin{theorem}\label{thm:DLMMMonotonicity}
	Consider an initial value problem for which $\F$ and $\tildeF$ satisfy the forward Euler
	conditions \eqref{eq:DLMMDiffFE} for some  $\DtFE > 0$, $\tildeDtFE > 0$.
	Let a consistent \plmm \eqref{eq:DLMM} be SSP with SSP coefficients $(\sspcoef, \tildesspcoef)$.
	Then the numerical solution satisfies the monotonicity condition \eqref{eq:Monotonicity}
	under a step-size restriction
	\begin{align}\label{eq:DLMMStepSize}
		\Dt \leq \min\{\sspcoef \,\DtFE, \tildesspcoef \, \tildeDtFE\}.
	\end{align}
\end{theorem}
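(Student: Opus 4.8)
The plan is to run the standard Shu--Osher convex-combination argument, adapted so that the two forward Euler conditions \eqref{eq:DLMMDiffFEa} and \eqref{eq:DLMMDiffFEb} are invoked each with its own threshold. First I would dispose of the trivial case $\Dt = 0$ and observe that when $\Dt > 0$ the restriction \eqref{eq:DLMMStepSize} already forces $\sspcoef > 0$ and $\tildesspcoef > 0$. Then I would introduce the normalized ratios $r := \Dt/\DtFE$ and $\tilde r := \Dt/\tildeDtFE$, so that \eqref{eq:DLMMStepSize} reads exactly $0 < r \le \sspcoef$ and $0 < \tilde r \le \tildesspcoef$. The point of this normalization is that $\Dt/r = \DtFE$ and $\Dt/\tilde r = \tildeDtFE$, so the rescaled Euler updates appearing below land precisely at the step sizes for which \eqref{eq:DLMMDiffFEa} and \eqref{eq:DLMMDiffFEb} hold.

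Taking the explicit case ($\beta_k = \tilde\beta_k = 0$) first, I would write $\u_n = \sum_{j=0}^{k-1}\bm{v}_j$ with $\bm{v}_j := \alpha_j\u_{n-k+j} + \Dt\beta_j\F(\u_{n-k+j}) - \Dt\tilde\beta_j\tildeF(\u_{n-k+j})$, and for each $j$ split the leading coefficient as $\alpha_j = (\alpha_j - r\beta_j - \tilde r\tilde\beta_j) + r\beta_j + \tilde r\tilde\beta_j$. By the SSP hypothesis \eqref{eq:DLMMMonCondDiffFE} --- which is required to hold for all admissible $r,\tilde r$, in particular the values just fixed --- the first summand is nonnegative, and the other two are nonnegative because $\beta_j,\tilde\beta_j \ge 0$. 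Regrouping then gives
\begin{align*}
	\bm{v}_j = {}& (\alpha_j - r\beta_j - \tilde r\tilde\beta_j)\,\u_{n-k+j} \\
	&+ r\beta_j\bigl(\u_{n-k+j} + \DtFE\,\F(\u_{n-k+j})\bigr) + \tilde r\tilde\beta_j\bigl(\u_{n-k+j} - \tildeDtFE\,\tildeF(\u_{n-k+j})\bigr),
\end{align*}
a nonnegative combination of $\u_{n-k+j}$ and two rescaled Euler steps. Applying the triangle inequality together with convexity of $\|\cdot\|$, and then \eqref{eq:DLMMDiffFEa} and \eqref{eq:DLMMDiffFEb} to the two Euler steps, bounds $\|\bm{v}_j\|$ by $\alpha_j\|\u_{n-k+j}\|$. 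Summing over $j$ and using $\alpha_j \ge 0$ together with the consistency relation $\sum_{j=0}^{k-1}\alpha_j = 1$ from \eqref{eq:DLMMOrderCond} yields $\|\u_n\| \le \sum_{j}\alpha_j\|\u_{n-k+j}\| \le \max_j\|\u_{n-k+j}\|$, which is exactly \eqref{eq:Monotonicity}.

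Finally, for an implicit method the term $\Dt\beta_k\F(\u_n) - \Dt\tilde\beta_k\tildeF(\u_n)$ survives on the right-hand side; I would move it to the left, bound the remaining sum over $j$ from $0$ to $k-1$ by $\max_j\|\u_{n-k+j}\|$ exactly as above, and close the estimate using that the implicit operator $\u \mapsto \u - \Dt\beta_k\F(\u) + \Dt\tilde\beta_k\tildeF(\u)$ is norm-nondecreasing under \eqref{eq:DLMMDiffFE} (equivalently, that its inverse is nonexpansive), which is the standard fact behind unconditional contractivity of backward-Euler-type solves. The only real departure from the classical single-threshold SSP LMM proof --- and the step I expect to require the most care --- is the bookkeeping that keeps $\DtFE$ and $\tildeDtFE$ separate throughout; the choice of $r$ and $\tilde r$ above is precisely what makes that work, so beyond that I anticipate no serious obstacle.
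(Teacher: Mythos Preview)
Your proposal is correct and follows essentially the same Shu--Osher convex-combination argument as the paper: you split $\alpha_j$ into three nonnegative pieces using the SSP inequalities \eqref{eq:DLMMMonCondDiffFE}, apply the two forward Euler bounds separately, and close with consistency. The only cosmetic difference is that the paper handles the implicit term uniformly by defining $\alpha_k := \sspcoef\beta_k + \tildesspcoef\tilde\beta_k$ and adding $\alpha_k\u_n$ to both sides (which is exactly the computation that proves your asserted ``norm-nondecreasing'' property of the implicit map), rather than treating it as a separate step.
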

\begin{proof}
	Define $\alpha_k  = \sspcoef\beta_k + \tildesspcoef\tilde{\beta_k}$ and add $\alpha_k \u_n$ to both
	sides of \eqref{eq:DLMM} to obtain
	\begin{align*}
            (1+\alpha_k)\u_n = \sum_{j=0}^{k}\left(\alpha_j \u_{n-k+j} - \Dt\beta_j\F(\u_{n-k+j}) +
		\Dt\tilde{\beta}_j\tildeF(\u_{n-k+j})\right).
	\end{align*}
	Since the method is SSP with coefficients $(\sspcoef,\tildesspcoef)$ then conditions
	\eqref{eq:DLMMMonCondDiffFE} hold for $r = \sspcoef$, $\tilde{r} = \tildesspcoef$.
	Let $\alpha_j = \hat{\alpha}_j + \tilde{\alpha}_j$
	with $\hat{\alpha}_j = \sspcoef \beta_j$.
	Then \eqref{eq:DLMMMonCondDiffFE} yields	$\tilde{\alpha}_j \ge \tildesspcoef\tilde{\beta}_j$
	and $\beta_j \ge 0, \tilde{\beta}_j \ge 0$.
	Thus, the right-hand side can be expressed as a convex combination of forward Euler steps:
	\ifpaper
	\begin{align*}
	\begin{split}
		(1 + \alpha_k)\u_n = & \sum_{j=0}^{k}\hat{\alpha}_j\Bigl(\u_{n-k+j} +
		\Dt\frac{\beta_j}{\hat{\alpha}_j}\F(\u_{n-k+j})\Bigr) + \\
		&\sum_{j=0}^{k}\tilde{\alpha}_j\Bigl(\u_{n-k+j} -
		\Dt\frac{\tilde{\beta}_j}{\tilde{\alpha}_j}\tildeF(\u_{n-k+j})\Bigr).
	\end{split}
	\end{align*}
	\else
	\begin{align*}
            (1 + \alpha_k)\u_n = \sum_{j=0}^{k}\hat{\alpha}_j\Bigl(\u_{n-k+j} +
            \Dt\frac{\beta_j}{\hat{\alpha}_j}\F(\u_{n-k+j})\Bigr) +
                \sum_{j=0}^{k}\tilde{\alpha}_j\Bigl(\u_{n-k+j} -
                \Dt\frac{\tilde{\beta}_j}{\tilde{\alpha}_j}\tildeF(\u_{n-k+j})\Bigr).
	\end{align*}
	\fi
	Taking norms and using the triangle inequality yields
	\ifpaper
	\begin{align*}
		(1 + \alpha_k)\|\u_n\| \leq & \sum_{j=0}^{k}\hat{\alpha}_j\Bigl\|\u_{n-k+j} +
		\Dt\frac{\beta_j}{\hat{\alpha}_j}\F(\u_{n-k+j})\Bigr\| + \\
		&\sum_{j=0}^{k}\tilde{\alpha}_j\Bigl\|\u_{n-k+j} -
		\Dt\frac{\tilde{\beta}_j}{\tilde{\alpha}_j}\tildeF(\u_{n-k+j})\Bigr\|.
	\end{align*}
	\else
	\begin{align*}
		(1 + \alpha_k)\|\u_n\| \leq \sum_{j=0}^{k}\hat{\alpha}_j\Bigl\|\u_{n-k+j} +
		\Dt\frac{\beta_j}{\hat{\alpha}_j}\F(\u_{n-k+j})\Bigr\| +
		\sum_{j=0}^{k}\tilde{\alpha}_j\Bigl\|\u_{n-k+j} -
		\Dt\frac{\tilde{\beta}_j}{\tilde{\alpha}_j}\tildeF(\u_{n-k+j})\Bigr\|.
	\end{align*}
	\fi
	Under the step-size restriction $\Dt \leq \min\{\sspcoef \,\DtFE,\tildesspcoef \, \tildeDtFE\}$ we
	get
	\begin{align*}
		\Dt\frac{\beta_j}{\hat{\alpha}_j} \le \DtFE \quad  \text{and}
		\quad \Dt\frac{\tilde{\beta}_j}{\tilde{\alpha}_j} \le \tildeDtFE.
	\end{align*}
	Since $\F$ and $\tildeF$ satisfy \eqref{eq:DLMMDiffFEa} and
	\eqref{eq:DLMMDiffFEb} respectively, we have
	\begin{align*}
		(1 + \alpha_k)\|\u_n\| &\leq \sum_{j=0}^{k}\hat{\alpha}_j\|\u_{n-k+j}\| +
		\sum_{j=0}^{k}\tilde{\alpha}_j\|\u_{n-k+j}\|,
	\end{align*}
	and hence
	\begin{align*}
		\|\u_n\| \le \sum_{j=0}^{k-1}\alpha_j\|\u_{n-k+j}\| \le
		\max_{0 \leq j \leq k-1}\|\u_{n-k+j}\| \sum_{j=0}^{k-1}\alpha_j.
	\end{align*}
        Consistency requires $\sum_{j=0}^{k-1}\alpha_j = 1$ and therefore the
        monotonicity condition \eqref{eq:Monotonicity} follows.
\end{proof}

\subsection{Optimal SSP perturbed \class{\lmm}}
We now turn to the problem of finding, among methods with a given number of steps $k$
and order of accuracy $p$, the largest SSP coefficients.  Since $\sspcoef$, $\tildesspcoef$ 
are continuous functions of the method's coefficients, we expect that the maximal step size
\eqref{eq:DLMMStepSize} is achieved when $\sspcoef = \tildesspcoef \, \tildeDtFE/\DtFE$.
It is thus convenient to define $y := \DtFE/\tildeDtFE$.
\begin{definition}\label{def:DLMMSSPCoeff}
For a fixed $y \in [0,\infty)$ we say that an SSP method \eqref{eq:DLMM} has \textit{SSP coefficient}
\begin{align*}
	\sspcoef(y) = \sup \bigl\{ r \ge 0 : \text{monotonicity conditions 
	\eqref{eq:DLMMMonCondDiffFE} hold with } \tilde{r} = y r \bigr\}
\end{align*}
and its corresponding downwind SSP coefficient is $\tildesspcoef(y) = y \, \sspcoef(y)$.
Given a number of steps $k$ and order of accuracy $p$ an SSP method is called \textit{optimal}, if it has
SSP coefficient
\ifpaper
	\begin{align*}
		\sspcoef_{k,p}(y) = \sup_{\bm{\alpha},\bm{\beta},\bm{\tilde{\beta}}} \bigl\{ \sspcoef(y) > 0 : \;
		&\sspcoef(y) \text{ is the SSP coefficient of a} \\[-10pt]
		&\text{$k$-step method \eqref{eq:DLMM} of order $p$} \bigr\}.
	\end{align*}
\else
	\begin{align*}
		\sspcoef_{k,p}(y) = \sup_{\bm{\alpha},\bm{\beta},\bm{\tilde{\beta}}} \bigl\{ \sspcoef(y) > 0 :
		\sspcoef(y) \text{ is the SSP coefficient of a $k$-step method \eqref{eq:DLMM} of order $p$}
		\bigr\}.
	\end{align*}
\fi
\end{definition}
Next we prove that for a given SSP \plmm with SSP coefficient $\sspcoef(y)$, we can construct another SSP
method \eqref{eq:DLMM} with the property that for each $j$, either $\beta_j$ or $\tilde{\beta}_j$
is zero.
Example~\ref{ex:DLMM} is an application of this result.
\begin{lemma}\label{lem:BetaTimesBeta}
	Consider a $k$-step \plmm \eqref{eq:DLMM} of order $p$ with SSP coefficient $\sspcoef(y)$ for a
	given $y$.
	Then, we can construct a $k$-step SSP method \eqref{eq:DLMM} of order $p$ with SSP coefficient at
	least $\sspcoef(y)$ that satisfies $\beta_j\tilde{\beta}_j = 0$ for each $j$.
	Moreover, both perturbed methods correspond to the same underlying method.
\end{lemma}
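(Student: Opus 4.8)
The plan is to build the new method from the given one by cancelling, index by index, the overlap between $\beta_j$ and $\tilde{\beta}_j$. Recalling that the underlying method has coefficients $\invbreve{\beta}_j = \beta_j - \tilde{\beta}_j$, I would set, for $j \inset{0}{k}$,
\begin{align*}
	\beta_j' = \max\{\invbreve{\beta}_j, 0\}, \qquad \tilde{\beta}_j' = \max\{-\invbreve{\beta}_j, 0\},
\end{align*}
and keep the $\alpha_j$ unchanged. Equivalently, one subtracts $\min\{\beta_j, \tilde{\beta}_j\}$ from both $\beta_j$ and $\tilde{\beta}_j$. By construction $\beta_j', \tilde{\beta}_j' \ge 0$, and for each $j$ at least one of them vanishes, so $\beta_j'\tilde{\beta}_j' = 0$ as required.

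The first thing to check is that this leaves the underlying method — and hence the order — unchanged. Indeed $\beta_j' - \tilde{\beta}_j' = \invbreve{\beta}_j = \beta_j - \tilde{\beta}_j$ and the $\alpha_j$ are untouched, so the perturbed method \eqref{eq:DLMM} with coefficients $(\alpha_j, \beta_j', \tilde{\beta}_j')$ is a perturbation of the same LMM \eqref{eq:LMM}. Since the order conditions \eqref{eq:DLMMOrderCond} depend on the coefficients only through the $\alpha_j$ and the differences $\beta_j - \tilde{\beta}_j$, they remain satisfied, so the new method is again of order $p$.

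It then remains to show the SSP coefficient did not decrease. Because each inequality in \eqref{eq:DLMMMonCondDiffFE} is affine and non-increasing in $(r, \tilde{r})$ (the slopes $-\beta_j, -\tilde{\beta}_j$ being non-positive), specializing to $\tilde{r} = yr$ as in Definition~\ref{def:DLMMSSPCoeff} gives $\sspcoef(y) = \min_{0 \le j \le k-1} \alpha_j / (\beta_j + y\tilde{\beta}_j)$, with the usual convention $\alpha_j/0 = +\infty$; the same formula holds for the new method with $\beta_j', \tilde{\beta}_j'$ in place of $\beta_j, \tilde{\beta}_j$. Hence it suffices to verify $\beta_j' + y\tilde{\beta}_j' \le \beta_j + y\tilde{\beta}_j$ for all $j$, which is immediate in each case: if $\invbreve{\beta}_j \ge 0$ the left side is $\beta_j - \tilde{\beta}_j \le \beta_j \le \beta_j + y\tilde{\beta}_j$, and if $\invbreve{\beta}_j < 0$ it is $y(\tilde{\beta}_j - \beta_j) \le y\tilde{\beta}_j \le \beta_j + y\tilde{\beta}_j$. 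Therefore the new method is SSP with SSP coefficient at least $\sspcoef(y)$.

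I do not expect a genuine obstacle; the argument is short once the construction is written down. The only points needing a little care are reading off from \eqref{eq:DLMMOrderCond} that the order conditions see only $\alpha_j$ and $\beta_j - \tilde{\beta}_j$, and handling the degenerate cases ($y = 0$, or a vanishing denominator) uniformly in the comparison of SSP coefficients. As the paper notes, the passage from \eqref{eq:PLMM32} to \eqref{eq:DLMM32} in Example~\ref{ex:DLMM} is exactly this termwise cancellation.
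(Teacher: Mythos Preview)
Your proposal is correct and is essentially the same as the paper's proof: both construct the new method by subtracting $\min\{\beta_j,\tilde{\beta}_j\}$ from each of $\beta_j$ and $\tilde{\beta}_j$ (the paper writes this via case-defined $\beta_j^*,\tilde{\beta}_j^*$, you via $\max\{\pm\invbreve{\beta}_j,0\}$), then observe that the order conditions depend only on $\alpha_j$ and $\beta_j-\tilde{\beta}_j$ while the monotonicity inequalities \eqref{eq:DLMMMonCondDiffFE} can only slacken. Your termwise verification that $\beta_j'+y\tilde{\beta}_j'\le\beta_j+y\tilde{\beta}_j$ is a bit more explicit than the paper's one-line ``observe that conditions \eqref{eq:DLMMMonCondDiffFE} \ldots\ are satisfied,'' but the argument is the same.
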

\begin{proof}
	Suppose there exists an $k$-step SSP method \eqref{eq:DLMM} of order $p$ with SSP coefficient
	$\sspcoef(y)$ for some $y \in [0,\infty)$, such that $\beta_j \ge \tilde{\beta}_j > 0$ for
	$j \in J_1 \subseteq \{0,1,\dots,k\}$ and $\tilde{\beta}_j > \beta_j > 0$ for
	$j \in J_2 \subseteq \{0,1,\dots,k\}$.
	Clearly $J_1 \cap J_2 = \emptyset$.
	Define 
	\begin{align*}
		\beta_j^* = \begin{cases}
								\beta_j - \tilde{\beta}_j, &\mbox{if } j \in J_1, \\
								0, &\mbox{if } j \notin J_1,
							\end{cases}
		\qquad
		\tilde{\beta}_j^*  = \begin{cases}
										0, &\mbox{if } j \notin J_2, \\
										\tilde{\beta}_j - \beta_j, &\mbox{if } j \in J_2.
									\end{cases}		
	\end{align*}
	Observe that conditions \eqref{eq:DLMMMonCondDiffFE} with
	$r = \sspcoef(y)$, $\tilde{r} = \tildesspcoef(y)$ and the order conditions
	\eqref{eq:DLMMOrderCond} are satisfied when $\beta_j,\tilde{\beta}_j$ are replaced by
	$\beta_j^*,\tilde{\beta}_j^*$.
	Therefore, the method with coefficients $(\bm{\alpha},\bm{\beta^*} ,\bm{\tilde{\beta^*}})$ has SSP
	coefficient at least $\sspcoef(y)$ and satisfies $\beta_j^*\tilde{\beta}_j^* = 0$ for each $j$.
	Finally, the definition of $\beta_j^*$ and $\tilde{\beta}_j^*$ leaves $\beta_j - \tilde{\beta}_j$
	invariant, thus substituting $\tildeF = \F$ in method \eqref{eq:DLMM} with coefficients
	$(\bm{\alpha},\bm{\beta} ,\bm{\tilde{\beta}})$ or
	$(\bm{\alpha},\bm{\beta^*} ,\bm{\tilde{\beta^*}})$ yields the same underlying method.
\end{proof}

The next Corollary is an immediate consequence of Lemma~\ref{lem:BetaTimesBeta}.
\begin{corollary}\label{col:OptimalBetaTimesBeta}
	Let $k$, $p$ and $y$ be given such that $\sspcoef_{k,p}(y) > 0$.
	Then there exists an optimal SSP \plmm \eqref{eq:DLMM} with SSP coefficient $\sspcoef_{k,p}(y)$
	that satisfies $\beta_j\tilde{\beta}_j = 0$ for each $j$.
\end{corollary}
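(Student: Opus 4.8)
The plan is to derive the corollary directly from Lemma~\ref{lem:BetaTimesBeta} applied to an optimal method, so essentially no new work is needed beyond a small matching argument. Fix $k$, $p$, $y$ with $\sspcoef_{k,p}(y) > 0$ and let $(\bm{\alpha},\bm{\beta},\bm{\tilde{\beta}})$ be an optimal $k$-step \plmm \eqref{eq:DLMM} of order $p$, i.e.\ one whose SSP coefficient (in the sense of Definition~\ref{def:DLMMSSPCoeff}) equals $\sspcoef_{k,p}(y)$. Lemma~\ref{lem:BetaTimesBeta} produces from it a $k$-step method \eqref{eq:DLMM} of order $p$, with the same underlying method, SSP coefficient at least $\sspcoef_{k,p}(y)$, and $\beta_j\tilde{\beta}_j = 0$ for each $j$. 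But by the very definition of $\sspcoef_{k,p}(y)$ as the supremum of SSP coefficients over all $k$-step order-$p$ methods of the form \eqref{eq:DLMM}, the new method's SSP coefficient is also $\le \sspcoef_{k,p}(y)$; hence it equals $\sspcoef_{k,p}(y)$, the new method is optimal, and it has $\beta_j\tilde{\beta}_j = 0$ for each $j$. This is exactly the assertion.

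The one point that deserves care, and the only real obstacle, is the existence of an optimal method to which the lemma can be applied — that is, the attainment of the supremum defining $\sspcoef_{k,p}(y)$. To settle this I would argue compactness: for a fixed $r \in (0,\sspcoef_{k,p}(y)]$, the monotonicity conditions \eqref{eq:DLMMMonCondDiffFE} with $\tilde r = yr$ together with the consistency and order conditions \eqref{eq:DLMMOrderCond} cut out a closed set of coefficient vectors, and it is bounded: consistency forces $\sum_{j=0}^{k-1}\alpha_j = 1$ with each $\alpha_j \ge r\beta_j + yr\tilde{\beta}_j \ge 0$, so $\alpha_j \in [0,1]$ and $\beta_j,\tilde{\beta}_j$ are bounded for $j \le k-1$, after which $\beta_k,\tilde{\beta}_k$ are pinned down through the order conditions and the auxiliary $\alpha_k = \sspcoef\beta_k + \tildesspcoef\tilde{\beta}_k$ used in the proof of Theorem~\ref{thm:DLMMMonotonicity}. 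Since the SSP coefficient is an upper semicontinuous function of the coefficients, a standard limiting argument along a maximizing sequence then yields a method attaining $\sspcoef_{k,p}(y)$.

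If one prefers to avoid a careful compactness bookkeeping for $\beta_k,\tilde{\beta}_k$, the limiting argument can instead be run on the images under Lemma~\ref{lem:BetaTimesBeta} of a maximizing sequence: every such image satisfies $\beta_j\tilde{\beta}_j = 0$, so each is described by one of the finitely many patterns recording, for every $j$, which of $\beta_j,\tilde{\beta}_j$ vanishes; by pigeonhole a single pattern recurs along a subsequence, and within that pattern the surviving coefficients remain bounded (by the same consistency estimates) and converge to an optimal method of the required form. Either route closes the gap, and combined with the matching argument of the first paragraph it proves Corollary~\ref{col:OptimalBetaTimesBeta}.
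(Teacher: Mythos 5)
Your proposal is correct and matches the paper, which presents the corollary as an immediate consequence of Lemma~\ref{lem:BetaTimesBeta} via exactly your matching argument (the lemma gives SSP coefficient at least $\sspcoef_{k,p}(y)$, and the supremum definition gives at most). The attainment-of-the-supremum point you flag is real but is handled in the paper by a separate compactness remark following Theorem~\ref{thm:SSPCoefUpperBound}, in the same spirit as your argument.
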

Based on Lemma~\ref{col:OptimalBetaTimesBeta} we have the following upper bound for the SSP coefficient of
any \plmm \eqref{eq:DLMM}.
This extends Theorem~2.2 in \cite{Ketcheson:2011:StepSizesDownwind}.
\begin{theorem}\label{thm:SSPCoefUpperBound}
	Given $y \in [0,\infty)$, any \plmm \eqref{eq:DLMM} of order greater than one satisfies
	$\sspcoef(y) \le 2$.
\end{theorem}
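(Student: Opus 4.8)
The plan is to distill the order conditions into one algebraic identity and then play it against the monotonicity inequalities \eqref{eq:DLMMMonCondDiffFE} together with the sign constraints $\beta_j,\tilde\beta_j\ge 0$. Write $\gamma_j := \beta_j-\tilde\beta_j$ for the coefficients of the underlying method. Since the order $p$ exceeds one, relations \eqref{eq:DLMMOrderCond} give in particular $\sum_{j=0}^{k-1}\alpha_j = 1$, $\sum_{j=0}^{k-1}j\alpha_j = k-\sum_{j=0}^{k}\gamma_j$, and $\sum_{j=0}^{k-1}j^2\alpha_j = k^2-2\sum_{j=0}^{k}j\gamma_j$. Expanding $(k-j)^2=k^2-2kj+j^2$ and substituting these three relations yields
\begin{align*}
	\sum_{j=0}^{k-1}(k-j)^2\alpha_j \;=\; 2\sum_{j=0}^{k-1}(k-j)\gamma_j ,
\end{align*}
where the $j=k$ contribution disappears on the right because of the factor $k-j$.

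Assuming $\sspcoef(y)>0$ (otherwise there is nothing to show) and taking $r=\sspcoef(y)$, $\tilde r=y\sspcoef(y)$, at which \eqref{eq:DLMMMonCondDiffFE} holds, I would now chase signs. On the right, $\gamma_j\le\beta_j$ since $\tilde\beta_j\ge 0$, and $k-j\ge 1$ for $0\le j\le k-1$, so the right-hand side is at most $2\sum_{j=0}^{k-1}(k-j)\beta_j$. On the left, \eqref{eq:DLMMMonCondDiffFE} gives $\alpha_j\ge\sspcoef(y)\beta_j+y\sspcoef(y)\tilde\beta_j\ge\sspcoef(y)\beta_j$ for $0\le j\le k-1$; multiplying by $(k-j)^2\ge 0$ and summing bounds the left-hand side below by $\sspcoef(y)\sum_{j=0}^{k-1}(k-j)^2\beta_j$. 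Hence $\sspcoef(y)\sum_{j=0}^{k-1}(k-j)^2\beta_j\le 2\sum_{j=0}^{k-1}(k-j)\beta_j$. Since $(k-j)^2\ge k-j\ge 1$ on this index range and each $\beta_j\ge 0$, we have $\sum_{j=0}^{k-1}(k-j)^2\beta_j\ge\sum_{j=0}^{k-1}(k-j)\beta_j\ge 0$, so as long as this quantity is positive, dividing gives $\sspcoef(y)\le 2$.

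The remaining point is the degenerate case $\beta_0=\cdots=\beta_{k-1}=0$, where the last division is illegitimate; but then the identity forces $\sum_{j=0}^{k-1}(k-j)^2\alpha_j=-2\sum_{j=0}^{k-1}(k-j)\tilde\beta_j\le 0$, and since every $\alpha_j\ge\sspcoef(y)\beta_j+y\sspcoef(y)\tilde\beta_j\ge 0$ this forces $\alpha_0=\cdots=\alpha_{k-1}=0$, contradicting consistency $\sum_{j=0}^{k-1}\alpha_j=1$, so this case never occurs and the bound holds in all cases. I expect the only real obstacle to be spotting the correct weight $(k-j)^2$ for the $\alpha_j$: it is essentially the unique combination of the three order conditions that annihilates the constant term while leaving non-negative multipliers on the $\gamma_j$, which is precisely what makes the sign-chasing close. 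Note that the reduction to methods with $\beta_j\tilde\beta_j=0$ from Lemma~\ref{lem:BetaTimesBeta} is not needed here, since only the inequality $\gamma_j\le\beta_j$ is used; the same argument also works verbatim for every $y\in[0,\infty)$.
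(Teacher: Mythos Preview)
Your proof is correct. Both your argument and the paper's hinge on the same key identity, obtained by combining the first three order conditions with weights $k^2$, $-2k$, $1$ so that the $\alpha_j$ appear with multiplier $(k-j)^2$; this is indeed the unique combination that does the job. The difference lies in the execution. The paper first invokes Lemma~\ref{lem:BetaTimesBeta} to reduce to a method with $\beta_j\tilde\beta_j=0$, then introduces auxiliary variables $\delta_j=\beta_j+y\tilde\beta_j$ and signs $\sigma_j$, rewrites the identity as a sum whose coefficients multiply the non-negative $\delta_j$ and $\gamma_j=\alpha_j-\sspcoef\delta_j$, and argues that at least one coefficient of a $\delta_{j_0}$ must be non-negative, which forces $\sspcoef\le 2/(k-j_0)\le 2$; the cases $y>0$ and $y=0$ are handled separately. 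Your route is more direct: you keep the identity in terms of $\alpha_j$ and the underlying coefficients $\beta_j-\tilde\beta_j$, bound the right-hand side above using $\beta_j-\tilde\beta_j\le\beta_j$ and the left-hand side below using $\alpha_j\ge\sspcoef(y)\beta_j$, and then compare the two weighted sums of $\beta_j$ via $(k-j)^2\ge k-j$. This avoids the reduction to $\beta_j\tilde\beta_j=0$, avoids the index-hunting, and treats all $y\ge 0$ uniformly. The paper's version does yield the marginally sharper byproduct $\sspcoef\le 2/(k-j_0)$ for some explicit index $j_0$ with $\beta_{j_0}>0$, which your sum inequality does not isolate, but that extra information is not used anywhere. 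One small stylistic point: your symbol $\gamma_j$ clashes with the paper's usage of $\gamma_j$ in \eqref{eq:DLMMGamma}; you may want to rename it to avoid confusion.
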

\begin{proof}
	Consider a second-order optimal SSP \plmm with SSP coefficient $\sspcoef = \sspcoef(y)$ and
	$\tildesspcoef = y \, \sspcoef(y)$ for some $y \in [0,\infty)$.
	Then, from Lemma~\ref{col:OptimalBetaTimesBeta} there exists an optimal method with the at least SSP
	coefficient $\sspcoef$ and coefficients $(\bm{\alpha},\bm{\beta},\tilde{\bm{\beta}})$ such that
	$\beta_j\tilde{\beta}_j = 0$ for each $j$.
	Suppose $y > 0$ and define $\delta_j = \beta_j + y\tilde{\beta}_j$ and
	\begin{align*}
		\sigma_j =
			\begin{cases} 
				 1 &\mbox{if } \tilde{\beta}_j = 0 \\
				 -1/y & \mbox{if } \beta_j = 0.
			\end{cases}
	\end{align*}
	Since either $\beta_j$ or $\tilde{\beta}_j$ is zero, then
	$\beta_j - \tilde{\beta}_j = \sigma_j\delta_j$ for all $j$.
	Let $\gamma_j = \alpha_j - \sspcoef\delta_j$ for $j \inset{0}{k-1}$.
	Taking $p=2$, $r = \sspcoef$, and $\tilde{r} = \tildesspcoef$  in \eqref{eq:DLMMOrderCond}, the
	second order conditions can be written as
	\begin{align}
		&\sum_{j=0}^{k-1} \gamma_j + \sspcoef\delta_j = 1, \label{eq:DLMMOrder2a} \\
		&\sum_{j=0}^{k-1} j\gamma_j + (j\sspcoef+\sigma_j)\delta_j = k - \sigma_k\delta_k,
		\label{eq:DLMMOrder2b} \\
		&\sum_{j=0}^{k-1} j^2\gamma_j + (j^2\sspcoef+2j\sigma_j)\delta_j = k(k - 2\sigma_k\delta_k).
		\label{eq:DLMMOrder2c}
	\end{align}
	Multiplying \eqref{eq:DLMMOrder2a}, \eqref{eq:DLMMOrder2b} and \eqref{eq:DLMMOrder2c}
	by $-k^2$, $2k$ and $-1$, respectively and adding all three expressions gives
	\begin{align}\label{eq:OrderCondExpression}
		\sum_{j=0}^{k-1} -(k-j)^2\gamma_j + \left(-\sspcoef(k-j)^2 + 2\sigma_j(k-j)\right)\delta_j = 0.
	\end{align}
	Since the method satisfies conditions \eqref{eq:DLMMMonCondDiffFE} for $r = \sspcoef$ and
	$\tilde{r} = \tildesspcoef$, then all coefficients $\gamma_j$ and $\delta_j$ are non-negative.
	Therefore, there must be at least one index $j_0$ such that the coefficient of $\delta_{j_0}$ in
	\eqref{eq:OrderCondExpression} is non-negative.
	Note that if $\beta_{j_0} = 0$, then $\sigma_{j_0} < 0$; hence it can only be that
	$\tilde{\beta}_{j_0} = 0$ and $\beta_{j_0} \neq 0$.
	Thus,
	\begin{align*}
		-\sspcoef(k-j_0)^2 + 2(k-j_0) \ge 0,
	\end{align*}
	which implies 
	\begin{align}\label{eq:SSPUpperBound}
		\sspcoef \le \frac{2}{k-j_0} \le 2
	\end{align}
	since $k - j_0 \ge 1$.
	If now $y = 0$, define $\delta_j = \beta_j + \tilde{\beta}_j$ and
	$\sigma_j = \text{sign}(\beta_j - \tilde{\beta}_j)$.
	Using $\gamma_j = \alpha_j - \sspcoef\beta_j$ and performing the same algebraic manipulations as
	before we get
	\begin{align}\label{eq:OrderCondExpression2}
		\sum_{j=0}^{k-1} -(k-j)^2(\gamma_j + \sspcoef\beta_j) + 2\sigma_j(k-j)\delta_j = 0.
	\end{align}
	Again, there must be at least one index $j_0$ in \eqref{eq:OrderCondExpression2} for which the
	coefficient of $\delta_{j_0}$ is non-negative, thus $\delta_{j_0} = \beta_{j_0} \neq 0$ and this
	yields the inequality \eqref{eq:SSPUpperBound}.
\end{proof}

\begin{remark}
	For given values $k, p, y$, it may be that there exists no method with positive SSP coefficients.
	However, from \eqref{eq:DLMMMonCondDiffFE} and Theorem~\ref{thm:SSPCoefUpperBound} if a method exists
	with bounded SSP coefficient, then the existence of an optimal method follows since the feasible
	region is compact.
\end{remark}
By combining conditions \eqref{eq:DLMMMonCondDiffFE} and \eqref{eq:DLMMOrderCond}, and
setting
\begin{align}\label{eq:DLMMGamma}
	\gamma_j = \alpha_j - r\beta_j - \tilde{r}\tilde{\beta}_j \quad \text{for} \quad j \inset{0}{k-1},
\end{align}
the problem of finding optimal SSP \class{\plmm} \eqref{eq:DLMM} can be formulated as a linear
programming feasibility problem:
\begin{lp}\label{DLMMFeasibilityProblem}
	For fixed $k \ge 1$, $p \ge 1$ and a given $y \in [0,\infty)$, determine whether there exist
	non-negative coefficients $\gamma_j$, $j \in \{0, \dots, k-1\}$ and $\beta_j, \tilde{\beta}_j$,
	$j \in \{0, \dots, k\}$ such that
	\begin{align}\label{eq:DLMMFeasibilityCond}
	\begin{gathered}
		\sum_{j=0}^{k-1} \gamma_j + r\beta_j + \tilde{r}\tilde{\beta}_j = 1, \qquad
		\sum_{j=0}^{k-1} j(\gamma_j + r\beta_j + \tilde{r}\tilde{\beta}_j) +
		\sum_{j=0}^k(\beta_j -\tilde{\beta}_j) = k, \\
		\sum_{j=0}^{k-1}(\gamma_j + r\beta_j + \tilde{r}\tilde{\beta}_j) j^i +
		\sum_{j=0}^{k}(\beta_j - \tilde{\beta}_j) ij^{i-1} = k^i, \quad i \inset{2}{p},
	\end{gathered}
	\end{align}
	for some value $ r \ge 0$ and $\tilde{r} = yr$.
\end{lp}
Expressing \eqref{eq:DLMMFeasibilityCond} in a compact form facilitates the analysis of the feasible
problem LP~\ref{DLMMFeasibilityProblem}.
Let the vector
\begin{align}\label{eq:DLMMVectora}
	\bm{a}_j := (1,j,j^2,\dots,j^p)^\intercal \in \R^{p+1},
\end{align}
and denote by $\bm{a}'_j$ the derivative of $\bm{a}_j$ with respect to $j$, namely
$\bm{a}'_j = (0, 1, 2j, \allowbreak\dots, pj^{p-1})^\intercal$.
Define
\begin{align}\label{eq:DLMMVectorb}
	\bm{b}^\pm_j(x) := \begin{cases}
						\pm x\bm{a}'_k & \mbox{if } j = k, \\
						\bm{a}_j \pm x\bm{a}'_j &\mbox{otherwise.}
					\end{cases}
\end{align}
The conditions \eqref{eq:DLMMFeasibilityCond} can be expressed in terms of vectors
$\bm{a}_j,\bm{b}^\pm_j(\cdot)$:
\begin{align}\label{eq:DLMMOrderCondVectorForm}
	\sum_{j=0}^{k-1} \gamma_j\bm{a}_j +
	r \sum_{j=0}^k \beta_j \bm{b}^+_j(r^{-1}) +
	\tilde{r} \sum_{j=0}^k \tilde{\beta}_j\bm{b}^-_j(\tilde{r}^{-1}) = \bm{a}_k.
\end{align}
The number of non-zero coefficients of an optimal SSP \plmm is given by
Theorem~\ref{thm:DLMMNonzeroCoeff}.
The following lemma is a consequence of Carath\'{e}odory's theorem, which states that if a vector $\bm{x}$
belongs to the convex hull of a set $S \subseteq \R^n$, then it can be expressed as a convex combination
of $n+1$ vectors in $S$.
The proof appears in Appendix~\ref{appx:ProofLemmas}.
\begin{lemma}\label{lem:DLMMConvexity}
	Consider a set $S = \{\bm{x}_1, \dots, \bm{x}_m\}$ of distinct vectors $\bm{x}_j \in \R^n$,
	$j \inset{1}{m}$.
	Let $C = \text{conv}(S)$ be the convex hull of $S$.
	Then the following statements hold:
	\begin{enumerate}[label=(\alph*)]
		\item Any non-zero vector in $C$ can be expressed as a non-negative linear combination of at most
			$n$ linearly independent vectors in $S$.
		\item Suppose the vectors in $S$ lie in the hyperplane $\{(1,\bm{v}) : \bm{v} \in \R^{n-1}\}$ of 
			$\R^n$.
			Then any non-zero vector in $C$ can be expressed as a convex combination of at most $n$
			linearly independent vectors in $S$.
	\end{enumerate}
\end{lemma}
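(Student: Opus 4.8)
The plan is to prove this as the standard "minimal representation" sharpening of Carath\'{e}odory's theorem, and then to deduce part~(b) from part~(a) using the fact that, under the hypothesis of (b), every vector of $S$ — and hence every point of $C$ — has first coordinate equal to $1$.

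For part~(a): given a nonzero $\bm{x} \in C$, I would start from an arbitrary convex representation $\bm{x} = \sum_j \lambda_j \bm{x}_j$, discard the vanishing coefficients, and then among \emph{all} representations $\bm{x} = \sum_{i \in I} c_i \bm{x}_i$ with every $c_i > 0$ pick one with $|I|$ as small as possible; note $I \neq \emptyset$, since the empty sum is $0 \neq \bm{x}$. I would then show $\{\bm{x}_i : i \in I\}$ is linearly independent. If it were not, there would be a nontrivial relation $\sum_{i \in I} \mu_i \bm{x}_i = 0$, and after a sign change we may assume $\mu_i > 0$ for some $i$. Setting $t^\star = \min\{c_i/\mu_i : i \in I,\ \mu_i > 0\} > 0$, attained at an index $i_0$, one checks that $c_i - t^\star \mu_i \geq 0$ for all $i \in I$ (immediate when $\mu_i \leq 0$, and by the choice of $t^\star$ when $\mu_i > 0$) while $c_{i_0} - t^\star \mu_{i_0} = 0$; hence $\bm{x} = \sum_{i \in I \setminus \{i_0\}} (c_i - t^\star \mu_i) \bm{x}_i$ is a non-negative representation with strictly fewer positive terms, contradicting minimality. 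Linear independence in $\R^n$ then forces $|I| \leq n$.

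For part~(b): I would first observe that every point of $C$ is a convex combination of the $\bm{x}_j$, each of which lies in $\{(1,\bm{v}) : \bm{v} \in \R^{n-1}\}$, so — this hyperplane being closed under convex combinations — every point of $C$ has first coordinate $1$; in particular $\bm{x}$ does. Applying part~(a) gives $\bm{x} = \sum_{i \in I} c_i \bm{x}_i$ with $c_i \geq 0$, the $\bm{x}_i$ linearly independent, and $|I| \leq n$. Comparing first coordinates on both sides yields $\sum_{i \in I} c_i = 1$, so this non-negative linear combination is automatically a convex combination, which proves (b).

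I do not anticipate a serious obstacle here. The only delicate points are the bookkeeping around $t^\star$ in part~(a) — verifying that it is strictly positive and that exactly the minimizing coefficient drops to zero while the remaining coefficients stay non-negative — and, in part~(b), recognizing that it is membership of $\bm{x}$ in $C$, rather than merely its being \emph{some} non-negative combination of elements of $S$, that pins the coefficient sum to $1$.
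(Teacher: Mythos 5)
Your proof is correct and follows essentially the same route as the paper's: both use the standard Carath\'{e}odory reduction step (subtract a suitable multiple of a linear dependence relation to zero out one coefficient while keeping the rest non-negative), and both deduce (b) from (a) by noting that the common first coordinate $1$ forces the coefficients to sum to $1$. The only cosmetic difference is that you organize the argument as "take a support-minimal representation and derive a contradiction," whereas the paper iterates the reduction until the surviving vectors are linearly independent; these are the same argument.
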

\begin{theorem}\label{thm:DLMMNonzeroCoeff}
	Let $k, p$ be positive integers such that $0 < \sspcoef_{k,p}(y) < \infty$ for a given
	$y \in [0,\infty)$.
	Then there exists an optimal \plmm \eqref{eq:DLMM} with SSP coefficient
	$\sspcoef = \sspcoef_{k,p}(y)$ that has at most $p$ non-zero coefficients $\gamma_i$,
	$i \inset{0}{k-1}$ and $\beta_j, \tilde{\beta}_j$, $j \inset{0}{k}$.
\end{theorem}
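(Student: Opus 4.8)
The plan is to read the order conditions in the compact vector form \eqref{eq:DLMMOrderCondVectorForm}, apply Carath\'eodory's theorem (Lemma~\ref{lem:DLMMConvexity}(a)) to obtain a representation with at most $p+1$ linearly independent vectors, and then use the \emph{maximality} of $\sspcoef_{k,p}(y)$ to exclude the case of $p+1$ vectors, leaving a representation with at most $p$.

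Since $0<\sspcoef_{k,p}(y)<\infty$, by the Remark following Theorem~\ref{thm:SSPCoefUpperBound} there is an optimal method with $\sspcoef=\sspcoef_{k,p}(y)=:r$ and $\tildesspcoef=yr$. Writing its order conditions in the form \eqref{eq:DLMMOrderCondVectorForm} exhibits $\bm{a}_k\in\R^{p+1}$ as a non-negative linear combination of the vectors in $S:=\{\bm{a}_j:0\le j\le k-1\}\cup\{\bm{b}^+_j(r^{-1}):0\le j\le k\}\cup\{\bm{b}^-_j((yr)^{-1}):0\le j\le k\}$, with respective coefficients $\gamma_j$, $r\beta_j$, $yr\tilde\beta_j\ge 0$. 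Their sum $\sigma$ is at least $1$ (by the first component of \eqref{eq:DLMMOrderCondVectorForm}), so $\bm{a}_k/\sigma\in\mathrm{conv}(S)$ and $\bm{a}_k\ne 0$; Lemma~\ref{lem:DLMMConvexity}(a) then gives $\bm{a}_k=\sum_{i=1}^{m}c_i\bm{v}_i$ with $\bm{v}_1,\dots,\bm{v}_m\in S$ linearly independent, $c_i>0$, and $m\le p+1$.

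The crux is ruling out $m=p+1$. Suppose $m=p+1$; then $\bm{v}_1,\dots,\bm{v}_{p+1}$ is a basis of $\R^{p+1}$. Because $r>0$, each $\bm{v}_i$ (whether of type $\bm{a}_j$, $\bm{b}^+_j(r^{-1})$ or $\bm{b}^-_j((yr)^{-1})$) depends continuously on $r$; replacing $r$ by a nearby $r'$ gives vectors $\bm{v}_i(r')$ that remain linearly independent (an open condition), hence still a basis, so $\bm{a}_k=\sum_i c_i(r')\bm{v}_i(r')$ with $c_i(r')$ continuous and $c_i(r)=c_i>0$, whence $c_i(r')>0$ for $r'$ near $r$. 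Reading the $c_i(r')$ back as $\gamma_j,\ r'\beta_j,\ yr'\tilde\beta_j$ and setting all remaining coefficients to zero produces a $k$-step method \eqref{eq:DLMM} of order $p$ (equation \eqref{eq:DLMMOrderCondVectorForm} being equivalent to the order conditions \eqref{eq:DLMMOrderCond} together with consistency), satisfying \eqref{eq:DLMMMonCondDiffFE} with $(r',yr')$: indeed $\beta_j,\tilde\beta_j\ge 0$, and the affine non-increasing function $\bar r\mapsto\alpha_j-\bar r\beta_j-y\bar r\tilde\beta_j$ equals $\gamma_j\ge 0$ at $\bar r=r'$, hence is non-negative on $[0,r']$. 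So this is an SSP method with SSP coefficient $\ge r'$, and choosing $r'>r$ contradicts $r=\sspcoef_{k,p}(y)$. (For $y=0$ the same argument works after replacing each generator $\bm{b}^-_j((yr)^{-1})$ by its limit $-\bm{a}'_j$, with coefficient $\tilde\beta_j$, which is again constant in $r$; this matches the separate treatment of $y=0$ in the proof of Theorem~\ref{thm:SSPCoefUpperBound}.) Therefore $m\le p$.

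Finally, taking $r'=r$ in $\bm{a}_k=\sum_{i=1}^{m}c_i\bm{v}_i$ with $m\le p$ and reading off $\gamma_j,\ r\beta_j,\ yr\tilde\beta_j$ yields a method with at most $p$ nonzero coefficients among the $\gamma_i$ and $\beta_j,\tilde\beta_j$; by the same reasoning it has order $p$ and SSP coefficient $\ge r$, and since $r=\sspcoef_{k,p}(y)$ is maximal its SSP coefficient is exactly $r$, so the method is optimal. I expect the main obstacle to be making the continuity/perturbation step of the third paragraph fully rigorous --- in particular checking that the perturbed coefficients stay non-negative and that they really furnish an SSP method with a \emph{strictly larger} SSP coefficient, not merely an order-$p$ method; once that is in place, Theorem~\ref{thm:DLMMMonotonicity}, Corollary~\ref{col:OptimalBetaTimesBeta} and Lemma~\ref{lem:DLMMConvexity} supply everything else.
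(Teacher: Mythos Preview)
Your proposal is correct and matches the paper's approach: both use the vector form \eqref{eq:DLMMOrderCondVectorForm}, invoke the Carath\'eodory-type Lemma~\ref{lem:DLMMConvexity} to reach at most $p+1$ linearly independent generators, and then exclude the full-basis case by the same continuity/perturbation argument you describe (the paper writes it as $\bm{x}_B(r)=B(r)^{-1}\bigl(\bm{a}_k-N(r)\bm{x}_N\bigr)$ and increments $r$ by $\epsilon$, which is exactly your ``$c_i(r')$ continuous and positive for $r'$ near $r$'' step). Your worry about the perturbation step is unfounded: positivity of the basic coefficients persists by continuity for small $\epsilon>0$, and since $\alpha_j-\bar r\beta_j-\bar{\tilde r}\tilde\beta_j$ is affine with non-positive slopes it is minimized at the corner $(r',yr')$ where it equals $\gamma_j\ge 0$, so the perturbed method is genuinely SSP with coefficient at least $r'>r$, contradicting optimality just as you argue.
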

\begin{proof}
	Consider an optimal LMM \eqref{eq:DLMM} with coefficients
	$(\bm{\alpha},\bm{\beta},\bm{\tilde{\beta}})$ and SSP coefficient $\sspcoef_{k,p}(y) > 0$,
	for a given $y \in [0,\infty)$.
	From Lemma~\ref{col:OptimalBetaTimesBeta} an optimal method can be chosen such that
	$\beta_j\tilde{\beta}_j = 0$ for each $j$.
	Using \eqref{eq:DLMMGamma} we can perform a change of variables and consider the vector of
	coefficients
	$\bm{x}(r) = \bigl(\bm{\gamma}(r), \bm{\beta}(r), \bm{\tilde{\beta}}(r)\bigr)^\intercal \in
	\R^{3k+2}$, $\bm{x}(r) \ge 0$.
	We will show that $\bm{x}$ has at most $p$ non-zero coefficients.
	Suppose on the contrary that $\bm{x}$ has at least $p+1$ non-zero coefficients
	\begin{align*}
		\gamma_{i_1}, \dots,\gamma_{i_m}, \beta_{j_1}, \dots, \beta_{j_n}, \tilde{\beta}_{l_1}, \dots,
		\tilde{\beta}_{l_s},
	\end{align*}
	where $0 \le i_1 < \dots < i_m \le k-1$, $0 \le j_1 < \dots < j_n \le k$ and
	$0 \le l_1 < \dots < l_s \le k$.

	Assume that the set
	\begin{align*}
		S = \left\{\bm{a}_{i_1}, \dots, \bm{a}_{i_m}, \bv{+}{j_1}{\frac{1}{r}}, \dots,
		\bv{+}{j_n}{\frac{1}{r}},\bv{-}{l_1}{\frac{1}{yr}}, \dots, \bv{-}{l_s}{\frac{1}{yr}}\right\}
	\end{align*}
	spans $\R^{p+1}$.
	Let $\tilde{r} = yr$; then the system of equations \eqref{eq:DLMMOrderCondVectorForm} can
	be written as $A(r)\bm{x}(r) = \bm{a}_k$, where
	\begin{align*}
		A(r) =
		\ifpaper\def\arraycolsep{4pt}\fi
		\def\arraystretch{2.5}
		\left[\begin{array}{c|c|c|c|c|c|c|c|c}
			\bm{a}_{i_1} & \dots & \bm{a}_{i_m} & r\bv{+}{j_1}{\frac{1}{r}} & \dots &
			r\bv{+}{j_n}{\frac{1}{r}} & yr\bv{-}{l_1}{\frac{1}{yr}} & \dots &
			yr\bv{-}{l_s}{\frac{1}{yr}} \\[1em]
		\end{array}\right].
	\end{align*}
	Let $\bm{x}_p = (\bm{x}_B, \bm{x}_N)^\intercal$ be a permutation of $\bm{x}$ such that
	$\bm{x}_B^\intercal \in \R^{p+1}$ is a strictly positive vector and
	$\bm{x}_N^\intercal \in \R^{3k-p+1}$ is non-negative.
	The columns of $A(r)$ can be permuted in the same way, yielding
	$A_p(r) = [B(r) \;|\; N(r)]$, where $B \in \R^{(p+1) \times (p+1)}$ and
	$N \in \R^{(p+1) \times (3k-p+1)}$.
	Hence, the columns of $B$ and $N$ are associated with  $\bm{x}_B$ and $\bm{x}_N$, respectively.
	From our assumption there must be a subset of $S$ that forms a basis for $\R^{p+1}$, hence $A(r)$ can
	be permuted in such a way so that $B(r)$ has full rank.
	Therefore, $A_p(r)\bm{x}_p(r) = \bm{a}_k$ gives
	$\bm{x}^\intercal_B(r) = B^{-1}(r)\bigr(\bm{a}_k - N(r)\bm{x}^\intercal_N\bigr)$.
	Since $\bm{x}_B(r)  > 0$, there exists $\epsilon > 0$ such that
	$\bm{x}^*_B = \bm{x}_B(r + \epsilon) > 0$.
	Note that we can choose to perturb only $\bm{x}_B$ and keep $\bm{x}_N$ invariant.
	Let $\bm{x}^*_p = (\bm{x}^*_B, \bm{x}_N)^\intercal$, then $A_p(r +\epsilon)\bm{x}^*_p = \bm{a}_k$.
	But this contradicts to the optimality of the method since we can construct a  $k$-step SSP \plmm of
	order $p$ and coefficients given by $\bm{x}^*$ and SSP coefficient $\sspcoef_{k,p}(y) + \epsilon$.

	Now, assume that the set $S$ does not span $\R^{p+1}$.
	Then the vectors in set $S$ lie in the hyperplane $\{(1,v) : v \in \R^p\} \subset \R^{p+1}$ and they
	are linearly dependent.
	If the method is explicit then $\beta_k = \tilde{\beta}_k = 0$ and $\bm{a}_k$ lies in the convex hull
	of $S$.
	Therefore, from part (b) of Lemma~\ref{lem:DLMMConvexity} the vector $\bm{a}_k$ can be expressed as a
	convex combination of $p$ vectors in $S$.
	In the case the method is implicit, assume without loss of generality that $\beta_k > 0$ and
	divide \eqref{eq:DLMMOrderCondVectorForm} by $(1+r\beta_k)$.
	The vector $(1+r\beta_k)^{-1}\bm{a}_k$ belongs to the convex hull of $S$ and thus from part (a) of
	Lemma~\ref{lem:DLMMConvexity} it can be written as a non-negative linear combination of $p$ vectors
	in $S$.
\end{proof}
Furthermore, uniqueness of optimal \class{\plmm} can be established under certain conditions
on the vectors $\bm{a}_j, \bm{b}^\pm_j$.
The following lemma is a generalization of \cite[Lemma~3.5]{Lenferink:1989:ExLMM}.
\begin{lemma}\label{lem:DLMMUniqueness}
	Consider an optimal \plmm \eqref{eq:DLMM} with SSP coefficient $\sspcoef = \sspcoef_{k,p}(y) > 0$ and
	$\tildesspcoef = y \, \sspcoef_{k,p}(y)$ for a given $y \in [0,\infty)$.
	Let the indices
	\begin{align*}
		0 \le i_1 < \dots < i_m \le k-1, \quad 0 \le j_1 < \dots < j_n \le k,
		\quad 0 \le l_1 < \dots < l_s \le k,
	\end{align*}
	where $m+n+s \le p$ be such that
	$\gamma_{i_1}, \dots, \gamma_{i_m}, \beta_{j_1} \dots, \beta_{j_n}, \tilde{\beta}_{l_1}, \dots,
	\tilde{\beta}_{l_s}$
	are the positive coefficients in \eqref{eq:DLMM}.
	Let us also denote the sets $I = \{0, \dots, k\}$, $I_1 = \{i_1, \dots, i_m\}$,
	$J_1 = \{j_1, \dots, j_n\}$, $J_2 = \{l_1, \dots, l_s\}$.
	Assume that the function
	\begin{align*}
		F(v) = \Det{\bm{v}}
	\end{align*}
	is either strictly positive or strictly negative, simultaneously for all
	$\bm{v} = \bm{a}_i$, $i \in I \setminus (I_1 \cup \{k\})$,
	$\bm{v} = \bv{+}{j}{{\mbox{\small $1/\sspcoef$}}}$, $j \in I \setminus J_1$ and
	$\bm{v} = \bv{-}{l}{{\mbox{\small $1/\tildesspcoef$}}}$, $l \in I \setminus J_2$.
	Then \eqref{eq:DLMM} is the unique optimal $k$-step SSP \plmm of order $p$.
\end{lemma}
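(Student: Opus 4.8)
The plan is to recast the search for an optimal \plmm as the search for a nonnegative solution of the single linear system \eqref{eq:DLMMOrderCondVectorForm}, and then to use the sign hypothesis on $F(\bm{v})=\Det{\bm{v}}$ — read as a linear functional of $\bm{v}$ — to show that solution is unique. Throughout I take $m+n+s=p$, as is needed for $\Det{\cdot}$ to be a $(p+1)\times(p+1)$ determinant.

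First I would freeze $\sspcoef=\sspcoef_{k,p}(y)$ and $\tildesspcoef=y\,\sspcoef$, pass to the coordinates $\bm{x}=(\bm{\gamma},\bm{\beta},\bm{\tilde{\beta}})^\intercal\ge 0$ with $\gamma_j$ given by \eqref{eq:DLMMGamma} at $r=\sspcoef$, $\tilde{r}=\tildesspcoef$, and write \eqref{eq:DLMMOrderCondVectorForm} as $A\bm{x}=\bm{a}_k$, where $A$ has columns $\bm{a}_i$ ($i\in\{0,\dots,k-1\}$), $\sspcoef\,\bm{b}^+_j(1/\sspcoef)$ ($j\in\{0,\dots,k\}$) and $\tildesspcoef\,\bm{b}^-_l(1/\tildesspcoef)$ ($l\in\{0,\dots,k\}$). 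The first point to establish is that the polytope $\mathcal{M}:=\{\bm{x}\ge 0:A\bm{x}=\bm{a}_k\}$ is exactly the set of optimal methods: for $\bm{x}\in\mathcal{M}$ one recovers $\alpha_j=\gamma_j+\sspcoef\beta_j+\tildesspcoef\tilde{\beta}_j\ge 0$, and since $\alpha_j-r\beta_j-\tilde r\tilde{\beta}_j$ is affine in $(r,\tilde r)$ and nonnegative at the corners of $[0,\sspcoef]\times[0,\tildesspcoef]$ (using $\gamma_j\ge 0$ and nonnegativity of the other terms), the monotonicity conditions \eqref{eq:DLMMMonCondDiffFE} hold on the whole box, so the method has SSP coefficient at least $\sspcoef_{k,p}(y)$ and hence equal to it. So uniqueness of the optimal \plmm is precisely the assertion $\mathcal{M}=\{\bm{x}^*\}$ for the method $\bm{x}^*$ of the lemma.

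Next I would unpack the determinant. Take $\bm{x}^*$ with positive entries indexed by $I_1,J_1,J_2$; the corresponding \emph{active} columns of $A$ are, up to positive scalar factors, exactly the $p$ vectors $\bm{a}_i$ ($i\in I_1$), $\bm{b}^+_j(1/\sspcoef)$ ($j\in J_1$), $\bm{b}^-_l(1/\tildesspcoef)$ ($l\in J_2$) appearing in $\Det{\bm{v}}$, and $\bm{a}_k=A\bm{x}^*$ is a strictly positive combination of them, so $\Det{\bm{a}_k}=0$. By multilinearity of the determinant, $F(\bm{v})=\Det{\bm{v}}$ is linear in $\bm{v}$ and vanishes precisely on the span $V$ of the active columns. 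The hypothesis says $F$ is strictly signed on the remaining $3k+2-p$ columns of $A$ — these are, up to positive scaling, the $\bm{a}_i$ with $i\in I\setminus(I_1\cup\{k\})$, the $\bm{b}^+_j(1/\sspcoef)$ with $j\in I\setminus J_1$, and the $\bm{b}^-_l(1/\tildesspcoef)$ with $l\in I\setminus J_2$ — so in particular $F\not\equiv 0$, which forces the $p$ active columns to be linearly independent (otherwise $[\bm{v}\mid\text{active columns}]$ is singular for every $\bm{v}$).

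Finally, for any $\bm{x}\in\mathcal{M}$ I would apply $F$ to $A\bm{x}=\bm{a}_k$. The active columns lie in $V$ and contribute nothing, so $\sum_q x_q\,F(\text{column }q)=0$ with the sum over the inactive columns only; since all the numbers $F(\text{column }q)$ share a single strict sign while all $x_q\ge 0$, every summand vanishes and $x_q=0$ at each inactive index. Hence $\bm{x}$ is supported on $I_1\cup J_1\cup J_2$ and solves $A\bm{x}=\bm{a}_k$ using only the (linearly independent) active columns, so $\bm{x}=\bm{x}^*$ and $\mathcal{M}=\{\bm{x}^*\}$. The step I expect to be the real work is the first one: carefully checking that $\mathcal{M}$ is genuinely the set of optimal methods and that the determinantal hypothesis simultaneously encodes "$\bm{a}_k\in V$" and "active columns independent"; once that is in place, the sign‑cancellation step is only a few lines. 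I would also record the minor modification when $y=0$: then $\tildesspcoef=0$, the vectors $\bm{b}^-_l(1/\tildesspcoef)$ are interpreted in the limiting sense as $-\bm{a}'_l$, and the argument applies verbatim.
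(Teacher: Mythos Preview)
Your approach is correct and essentially the same as the paper's: both expand $0=F(\bm{a}_k)$ by multilinearity into a nonnegative combination of the strictly-signed values $F(\cdot)$ at the inactive vectors, forcing those coefficients to vanish, and then use linear independence of the active columns to pin down the remaining ones. The only cosmetic differences are that the paper phrases it as a direct contradiction (``assume another optimal method exists'') rather than via the polytope $\mathcal{M}$, and it invokes Lemma~\ref{lem:DLMMConvexity} for linear independence of the active columns rather than extracting it, as you do, from $F\not\equiv 0$.
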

\begin{proof}
	Assume there exists another optimal $k$-step method of order least $p$ with coefficients
	$(\bm{\alpha^*},\bm{\beta^*},\bm{\tilde{\beta}^*})$.
	Define $\gamma_i^* = \alpha_i^* - \sspcoef\beta_i^* - \tildesspcoef\tilde{\beta}_i^*$,
	$i \inset{0}{k-1}$, then by the monotonicity conditions \eqref{eq:DLMMMonCondDiffFE} and
	Definition~\ref{def:DLMMSSPCoeff} we have
	\begin{align*}
		\gamma_i^* \ge 0 &\qquad\qquad i \inset{0}{k-1}, \\
		\beta_j^* \ge 0, \tilde{\beta}_j^* \ge 0 &\qquad\qquad j \inset{0}{k}, \\
		\sum_{i=0}^{k-1} \gamma_i^*\bm{a}_i +
		\sspcoef \sum_{j=0}^k \beta_j^* &\bv{+}{j}{\frac{1}{\raisebox{-0.4ex}{$\scriptstyle\sspcoef$}}}
		+ \tildesspcoef \sum_{j=0}^k \tilde{\beta}_j^*
		\bv{-}{j}{\frac{1}{\raisebox{-0.6ex}{$\scriptstyle\tildesspcoef$}}} = \bm{a}_k.
	\end{align*}
	Since the method \eqref{eq:DLMM} with coefficients $(\bm{\alpha},\bm{\beta},\bm{\tilde{\beta}})$ is
	optimal, then $\bm{a}_k$ can be also written as a linear combination of vectors
	\begin{align}\label{eq:DLMMVectors}
		\bm{a}_{i_1}, \dots, \bm{a}_{i_m},
		\bv{+}{j_1}{\frac{1}{\raisebox{-0.4ex}{$\scriptstyle\sspcoef$}}} \dots,
		\bv{+}{j_n}{\frac{1}{\raisebox{-0.4ex}{$\scriptstyle\sspcoef$}}},
		\bv{-}{l_1}{\frac{1}{\raisebox{-0.6ex}{$\scriptstyle\tildesspcoef$}}}, \dots,
		\bv{-}{l_s}{\frac{1}{\raisebox{-0.6ex}{$\scriptstyle\tildesspcoef$}}},
	\end{align}
	and moreover from Lemma~\ref{lem:DLMMConvexity} the vectors in \eqref{eq:DLMMVectors} are
	linearly independent.
	Hence,
	\begin{align*}
		0 =& \Det{\bm{a}_k} \\
		  =& \sum_{i=0}^{k-1} \gamma_i^*\,\Det{\bm{a}_i} + \\
		  &\sspcoef \sum_{j=0}^k \beta_j^*\,
		  \Det{\bv{+}{j}{\frac{1}{\raisebox{-0.4ex}{$\scriptstyle\sspcoef$}}}} + \\
		  &\tildesspcoef \sum_{j=0}^k \tilde{\beta}_j^*\,
		  \Det{\bv{-}{j}{\frac{1}{\raisebox{-0.6ex}{$\scriptstyle\tildesspcoef$}}}}.
	\end{align*}
	By positivity of coefficients $\gamma_i^*, \beta_j^*, \tilde{\beta}_j^*$ and the assumptions of the
	lemma, we have $\gamma_i^* = 0$, $i \notin I_1$, $\beta_j^* = 0$, $j \notin J_1$ and
	$\tilde{\beta}_j^* = 0$, $j \notin J_2$.
	Linear independence of the vectors in \eqref{eq:DLMMVectors} implies that $\gamma_i^* = \gamma_i$,
	$i \in I_1$ and $\beta_j^* = \beta_j$,  $j \in J_1$ and $\tilde{\beta}_j^* = \tilde{\beta}_j$,
	$j \in J_2$ and the statement of the lemma is proved.
\end{proof}
Fixing the number of steps $k$, and the order of accuracy $p$, the feasibility problem
LP~\ref{DLMMFeasibilityProblem} has been numerically solved for different values of $y$, by using
\texttt{linprog} from \texttt{MATLAB}'s optimization toolbox.
Optimal explicit and implicit \class{\plmm} are found for $k \inset{1}{40}$ and $p \inset{1}{15}$.
\begin{remark}
	In all cases we have investigated, the SSP coefficient $\sspcoef(y)$
	(see Definition~\ref{def:DLMMSSPCoeff}) is a strictly decreasing function.
	Similarly, the corresponding SSP coefficient $\tildesspcoef(y)$ is strictly
	increasing.  This suggests that whenever $\F$ and
	$\tildeF$ satisfy \eqref{eq:DLMMDiffFE}, then for a fixed number
	of stages and order of accuracy, the optimal \plmm obtained by
	considering the different step sizes in \eqref{eq:DLMMDiffFE}, allows \emph{larger} step sizes for
	monotonicity than what is allowed by the optimal downwind SSP method obtained
	just by taking the minimum of the two forward Euler step sizes.
	This behavior is shown in Figure~\ref{fig:DLMM} for the class of
	two-step, second-order \class{\plmm}.
\end{remark}
\begin{remark}
	The dependence of the SSP coefficient $\sspcoef(y)$ with respect to $y$ can be explained in view of
	equations \eqref{eq:DLMMFeasibilityCond} and forward Euler conditions \eqref{eq:DLMMDiffFE}.
	As $y$ approaches zero, the step-size restriction in \eqref{eq:DLMMDiffFEa} becomes more
	severe, but \eqref{eq:DLMMFeasibilityCond} depends less on coefficients $\tilde{\beta}_j$ enabling
	larger SSP coefficients to be obtained.
	On the other hand, as $y$ tends to infinity the step-size restriction of forward Euler condition
	\eqref{eq:DLMMDiffFEb} is stricter and coefficients $\tilde{\beta}_j$ tend to zero.
	In other words, the best possible SSP method in this case would be a method without downwind and thus
	the SSP coefficient $\sspcoef(y)$ approaches the corresponding SSP coefficient of traditional
	LMMs \eqref{eq:LMM}.
\end{remark}
\subsection{Examples}
Here we illustrate the effectiveness of \class{\plmm} by presenting two
examples.  We consider the following assumptions:
\begin{enumerate}
	\item Condition \eqref{eq:FECond} holds only for operator $\F$;
	\item Conditions \eqref{eq:DLMMDiffFE} hold for $\F$ and $\tildeF$ under a step-size restriction
		$\Dt \le \min\{\DtFE,\tildeDtFE\}$;
	\item Conditions \eqref{eq:DLMMDiffFE} hold for $\F$ and $\tildeF$ under different step-size
		restrictions.
\end{enumerate}
In the literature, traditional SSP LMMs applied to problems satisfying assumption (1) have been
extensively studied, for example see \cite{Lenferink:1989:ExLMM, Lenferink:1991:ImLMM,
Hundsdorfer/Ruuth/Spiteri:2003:MonotonicityLMM}.
Downwind SSP LMMs \cite{Shu:1988:TVD,Shu/Osher:1988:ENO,
Ruuth/Hundsdorfer:2005:GeneralMonotonicityBoundednessLMM, Ketcheson:2009:OptimalMonotonicityGLM,
Ketcheson:2011:StepSizesDownwind} were introduced for problems that comply with assumption (2), whereas
methods for problems satisfying assumption (3) are the topic of this work.
\begin{example}\label{ex:ExODE}
	Consider the ODE problem
	\begin{equation}\label{eq:ExODE}
		\begin{split}
			u'(t) &= u(t)^2(u(t)-1), \quad t \ge 0 \\
			u(t_0) &= u_0.
		\end{split}
	\end{equation}
	The right-hand side is Lipschitz continuous in $u$ in a close interval containing $[0,1]$.
	Thus, there exists a unique solution and it is easy to see that existence holds for all $t$.
	Therefore, if $u(t_0) = 0$ or $u(t_0) = 1$, then $u(t) = 0$ or $u(t) = 1$, respectively for all $t$.
	If $u_0 \in [0,1]$, uniqueness implies that $u(t) \in [0,1]$ for all $t$.
	It can be also shown that if $u \in (0,1]$, then
	\begin{align*}
            0 & \le u + \Dt\, u^2(u-1) \le 1 \quad \text{for } 0 \le \Dt \le 4, \\
            0 & \le u - \Dt\, u^2(u-1) \le 1 \quad \text{for } 0 \le \Dt \le 1.
	\end{align*}
	Applying method \eqref{eq:DLMM} where $\F = u^2(u-1)$, it is natural to take $\tildeF = \F$, and
	then we have that \eqref{eq:DLMMDiffFE} holds with $\DtFE = 4$ and $\tildeDtFE = 1$.
	For method \eqref{eq:LMM32}, in practice we observe that $u_n \in 0,1]$
	whenever $\Dt \le 8/7$.  The method has $\sspcoef_\text{LMM}=0$,
	so applying only assumption (1) above we cannot expect a monotone
	solution under any step size.  Using assumption (2), and writing
	the method in the form \eqref{eq:DLMM32} (notice that perturbations do
	not change the method at all in this case, since $\tildeF = \F$)
	we obtain a step-size restriction
	$\Dt \le \tildesspcoef_{\text{LMM}}\min\{\DtFE,\tildeDtFE\} = 2/7$, since
	$\tildesspcoef_{\text{LMM}} = 2/7$.
	Finally, using assumption (3) to take into account the different
	forward Euler step sizes for $\F$ and $\tildeF$, we obtain the
	step-size restriction $\Dtmax = \tildesspcoef_{\text{LMM}}  \DtFE = 8/7$,
	which matches the experimental observation.

	An even larger step-size restriction can be achieved by finding the
	optimal \plmm among the class of two-step, second-order \class{\plmm}.
	In this case $y = \DtFE/\tildeDtFE = 4$ and the optimal \plmm has SSP coefficient
	$\sspcoef_{2,2}(4) = 0.3465$, thus the numerical solution is guaranteed to lie in the interval
	$[0,1]$ if the step size is at most $\Dtmax = \sspcoef_{2,2}(4) \DtFE = 1.386$.
\end{example}

\begin{figure}
	\centering
    \includegraphics[width=0.55\textwidth]{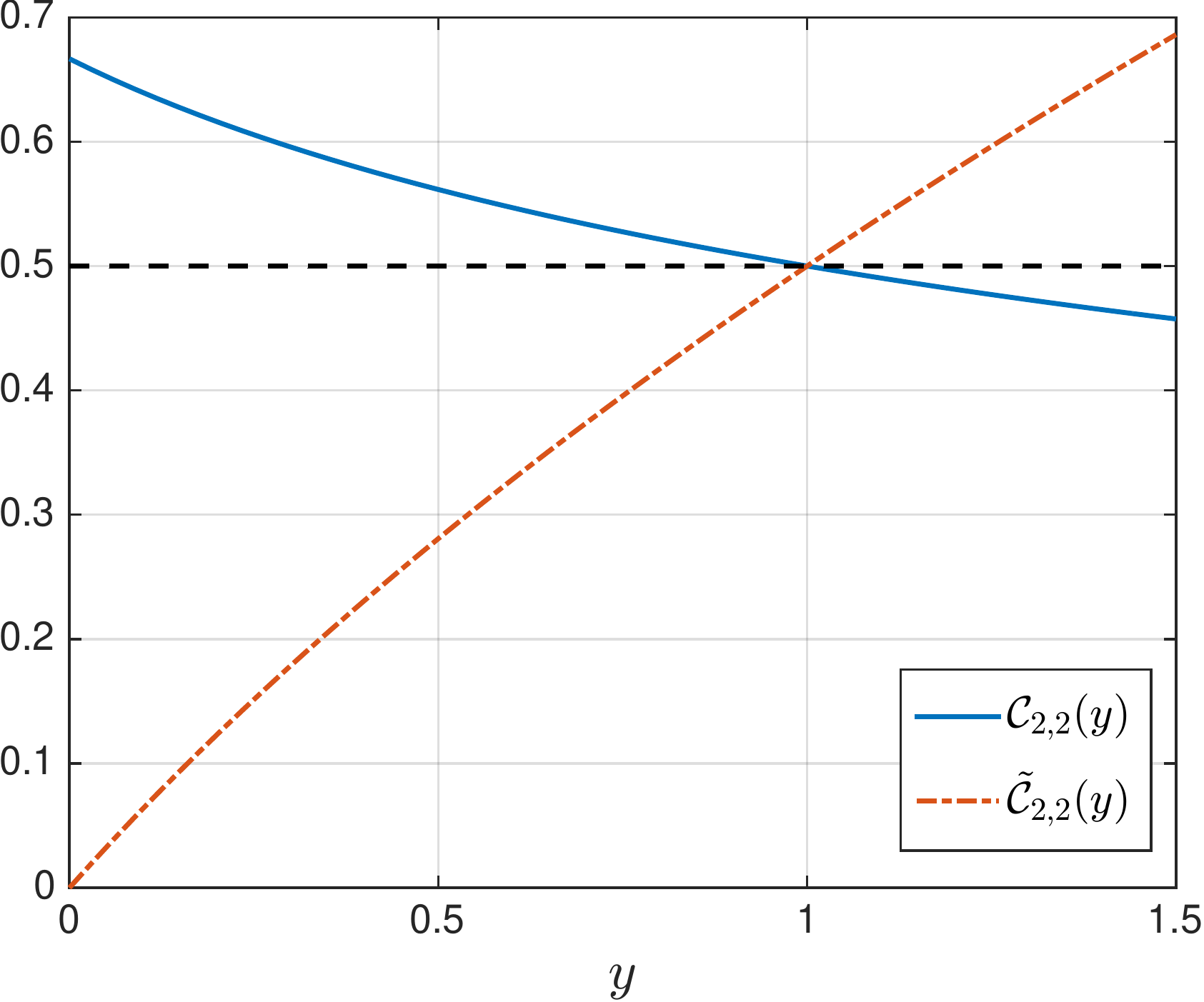}
    \caption{Functions $\sspcoef_{2,2}(y)$ and $\tildesspcoef_{2,2}(y)$ for the class of explicit
    two-step, second-order \class{\plmm}.
    The dotted line shows $\tildesspcoef_{\text{LMM}} = \sspcoef_{2,2}(1)$ for this particular class of
    methods.}
    \label{fig:DLMM}
\end{figure}

For purely hyperbolic problems the spatial discretizations are usually chosen in such a way that $\F$
and $\tildeF$ satisfy \eqref{eq:DLMMDiffFE} under the same step-size restriction.
However, in many other cases (e.g. advection-reaction problems) this is not the case, as shown in
Example~\ref{ex:DLMM_PDE}.
First, we mention the following lemma which is an extension of
\cite{Donat/Higueras/Martinez-Gavara:IMEXStiffReaction}; its proof can be found in
Appendix~\ref{appx:ProofLemmas}.
\begin{lemma}\label{lem:ALMMStepSize}
	Consider the function
	\begin{align*}
		f(u) = \sum_{i=1}^n f_i(u)
	\end{align*}
	and assume that there exist $\epsilon_i > 0 $ such that $||u + \tau f_i(u)|| \le ||u||$ for
	$0 \le \tau \le \epsilon_i$, $i \inset{1}{n}$, where $||\cdot||$ is a convex functional.
	Then $||u + \tau f(u)|| \le ||u||$ for $0 \le \tau \le \epsilon$, where
	\begin{align*}
		\epsilon = \left(\sum_{i=1}^n\frac{1}{\epsilon_i}\right)^{-1}.
	\end{align*}
\end{lemma}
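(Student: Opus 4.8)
The plan is to represent $u + \tau f(u)$ as a convex combination of the $n$ individual forward Euler steps $u + \tau_i f_i(u)$ (for suitably chosen step sizes $\tau_i$), each of which is controlled by the hypothesis, and then to invoke convexity of $\|\cdot\|$ (Jensen's inequality) to pass the bound through the combination. This is the same mechanism used throughout the paper to prove SSP/monotonicity statements; here it is applied in its simplest, ``one-step'' form.

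Concretely, I would introduce the weights $\lambda_i := \epsilon/\epsilon_i$ for $i \in \{1,\dots,n\}$. By the very definition $\epsilon = \bigl(\sum_{i=1}^n 1/\epsilon_i\bigr)^{-1}$ these satisfy $\lambda_i > 0$ and $\sum_{i=1}^n \lambda_i = 1$, so they constitute a genuine convex weighting; note also that $\epsilon \le \epsilon_i$ for each $i$, so the hypotheses are non-vacuous on $[0,\epsilon]$. Fix $\tau$ with $0 \le \tau \le \epsilon$ (the case $\tau=0$ being trivial), and write the identity
\begin{align*}
	u + \tau f(u) = \sum_{i=1}^n \lambda_i \Bigl( u + \frac{\tau}{\lambda_i} f_i(u) \Bigr),
\end{align*}
which holds because $\sum_i \lambda_i u = u$ and $\sum_i \lambda_i \cdot (\tau/\lambda_i) f_i(u) = \tau \sum_i f_i(u) = \tau f(u)$. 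The reason for this particular choice of $\lambda_i$ is that the rescaled step sizes satisfy $\tau/\lambda_i = \tau\epsilon_i/\epsilon \le \epsilon_i$ whenever $\tau \le \epsilon$, so each internal argument $u + (\tau/\lambda_i) f_i(u)$ is a forward Euler step with step size in the admissible range $[0,\epsilon_i]$; the hypothesis therefore gives $\bigl\| u + (\tau/\lambda_i) f_i(u) \bigr\| \le \|u\|$ for every $i$.

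Finally, applying convexity of $\|\cdot\|$ to the displayed convex combination yields
\begin{align*}
	\|u + \tau f(u)\| \le \sum_{i=1}^n \lambda_i \Bigl\| u + \frac{\tau}{\lambda_i} f_i(u) \Bigr\| \le \sum_{i=1}^n \lambda_i \|u\| = \|u\|,
\end{align*}
which is the claim. There is no genuine obstacle in this argument; the only point requiring care is the selection of the weights so that the rescaled step sizes $\tau/\lambda_i$ stay below all of the thresholds $\epsilon_i$ simultaneously — it is exactly this requirement that forces $\epsilon$ to be the harmonic-type combination $\bigl(\sum_i 1/\epsilon_i\bigr)^{-1}$, and it also makes clear that this value is the natural (and, in general, sharp) one.
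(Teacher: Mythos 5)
Your proof is correct and uses essentially the same idea as the paper: writing $u+\tau f(u)$ as a convex combination with weights $\epsilon/\epsilon_i$ of forward Euler steps for the individual $f_i$ and invoking convexity of $\|\cdot\|$. The only (cosmetic) difference is that you absorb $\tau$ into the internal step sizes $\tau\epsilon_i/\epsilon\le\epsilon_i$ and treat all $\tau\in[0,\epsilon]$ in one application of convexity, whereas the paper first establishes the bound at $\tau=\epsilon$ using the steps $u+\epsilon_i f_i(u)$ and then extends to $\tau<\epsilon$ by a second convexity (interpolation) argument.
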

\begin{example}\label{ex:DLMM_PDE}
	Consider the LeVeque and Yee problem
	\cite{LeVeque/Yee:1990:HCLStiffTerms,Donat/Higueras/Martinez-Gavara:IMEXStiffReaction}
	\begin{align*}
		U_t + f(U)_x = s(U), \quad U(x,0) = U_0(x), \quad x \in \R, \,  t \ge 0,
	\end{align*}
	where $s(U) = -\mu U (U - 1) (U - \frac{1}{2})$ and $\mu > 0$.
	Let $u_i(t) \approx U(x_i,t)$; then first-order upwind semi-discretization yields
	\begin{align*}
		u'(t) = F(u(t)) = D(u(t)) + S(u(t)), \quad u(0) = u_0, \quad t > 0,
	\end{align*}
	where
	\begin{align*}
		D_i(u) = -\frac{f(u_i) - f(u_{i-1})}{\Dx}, \qquad S_i(u) = s(u_i).
	\end{align*}
	Consider also the downwind discretizations
	\begin{align*}
		\widetilde{D}_i(u) = -\frac{f(u_{i+1}) - f(u_i)}{\Dx}, \qquad \widetilde{S}_i(u) = s(u_i),
	\end{align*}
	and let $\widetilde{F} = \widetilde{D} + \widetilde{S}$.
	If $u \in [0,1]$, it can be easily shown that
	\begin{align*}
		&0 \le u + \Dt\, S(u) \le 1 \quad \text{for } 0 \le \Dt \le \DtFE =
		\frac{2}{\mu}, \\
		&0 \le u - \Dt\, \widetilde{S}(u) \le 1 \quad \text{for } 0 \le \Dt \le \tildeDtFE =
		\frac{16}{\mu}.
	\end{align*}
	Using Lemma~\ref{lem:ALMMStepSize} we then have that
	\begin{align*}
		&0 \le u + \Dt\, F(u) \le 1 \quad \text{for } 0 \le \Dt \le \DtFE = 
		\frac{2\tau}{2 + \mu\tau}, \\
		&0 \le u - \Dt\, \widetilde{F}(u) \le 1 \quad \text{for } 0 \le \Dt \le \tildeDtFE =
		\frac{16\tau}{16 + \mu\tau},
	\end{align*}
	where $\tau > 0$ is such that
	\begin{align*}
		&0 \le u + \Dt\, D(u) \le 1 \quad \text{for } 0 \le \Dt \le \tau, \\
		&0 \le u - \Dt\, \widetilde{D}(u) \le 1 \quad \text{for } 0 \le \Dt \le \tau.
	\end{align*}
	Note that $\DtFE < \tildeDtFE$ for all positive values of $\mu$ and $\tau$.
	Therefore, under assumptions (1) and (2) above, the forward Euler step size must be equal to
	$2\tau/(2 + \mu\tau)$ so that the numerical solution is stable.
	Let $y = \DtFE/\tildeDtFE$, then for all $y < 1$ we have
	$\tildesspcoef_{\text{LMM}} = \sspcoef(1) < \sspcoef(y)$, hence not considering SSP \class{\plmm}
	will always result to a stricter step-size restriction.
	Suppose $\mu$ is relatively small so that the problem is not stiff and explicit methods could be
	used.
	For instance, among the class of explicit two-step, second-order LMMs, there is no classical
	SSP method and the optimal downwind method has SSP coefficient $\tildesspcoef_{\text{LMM}} = 1/2$.
	Let $\mu\tau = 2/3$, then the step-size bound for downwind SSP methods such that the solution remains
	in $[0,1]$ is $\Dt \le 0.375\tau$.
	Using the optimal two-step, second-order SSP \plmm larger step sizes are allowed since
	$\Dt \le  \sspcoef(y) \,\DtFE = 0.3929\tau$, where $y = \frac{16 + \mu\tau}{8(2 + \mu\tau)}$.
\end{example}

\section{Monotonicity of additive \class{\lmm}}\label{sec:ALMM}
Following the previous example, it is natural to study the monotonicity properties of additive methods
applied to problems which consist of components that describe different physical processes.
A $k$-step \almm for the solution of the initial value problem
\begin{equation}\label{eq:AddODE}
	\begin{split}
		\u'(t) &= \F(\u(t)) + \hatF(\u(t)), \quad t \ge t_0 \\
		\u(t_0) &= \u_0,
	\end{split}
\end{equation}
 takes the form
\begin{align}\label{eq:ALMM}
	\u_n = \sum_{j=0}^{k-1}\alpha_j\u_{n-k+j} +
	\Dt\sum_{j=0}^k\left(\beta_j\F(\u_{n-k+j}) + \hat{\beta}_j\hatF(\u_{n-k+j})\right).
\end{align}
The method is explicit if $\beta_k = \hat{\beta}_k = 0$.
It can be shown that method \eqref{eq:ALMM} is order $p$ accurate if
\begin{align}\label{eq:ALMMOrderCond}
\begin{gathered}
	\sum_{j=0}^{k-1} \alpha_j = 1, \qquad \sum_{j=0}^{k-1} j\alpha_j + \sum_{j=0}^k \beta_j = k, \qquad
	\sum_{j=0}^{k-1} j\alpha_j + \sum_{j=0}^k \hat{\beta_j} = k, \\
	\sum_{j=0}^{k-1}\alpha_j j^i + \sum_{j=0}^{k}\beta_j ij^{i-1} = k^i, \quad
	\sum_{j=0}^{k-1}\alpha_j j^i + \sum_{j=0}^{k}\hat{\beta}_j ij^{i-1} = k^i,\quad i \inset{2}{p}.
\end{gathered}
\end{align}
The operators $\F$ and $\hatF$ generally approximate different derivatives and also have different
stiffness properties.
We extend the analysis of monotonicity conditions for LMMs by assuming that $\F$ and $\hatF$
satisfy 
\begin{subequations}\label{eq:ALMMDiffFE}
	\begin{align}
		\|\u + \Dt\F(\u)\| \leq \|\u\|, &\quad \forall \u \in \R^m, \; 0 \leq \Dt \leq \DtFE,
		\label{eq:ALMMDiffFEa} \\
		\|\u + \Dt\hatF(\u)\| \leq \|\u\|, &\quad \forall \u \in \R^m, \; 0 \leq \Dt \leq \hatDtFE,
		\label{eq:ALMMDiffFEb}
	\end{align}
\end{subequations}
respectively.
\begin{definition}\label{def:SSPALMM}
	An \almm \eqref{eq:ALMM} is said to be strong stability preserving (SSP) if the following
	monotonicity conditions
	\begin{equation}\label{eq:ALMMMonCond}
		\begin{split}
			\beta_j, \hat{\beta}_j \geq 0, &\quad j \inset{0}{k}, \\
			\alpha_j - r\beta_j - \hat{r}\hat{\beta}_j \geq 0, &\quad j \inset{0}{k-1}.
		\end{split}
	\end{equation}
	hold for $r \ge 0$ and $\hat{r} \ge 0$.
	For a fixed $y = \hat{r}/r $ the method has SSP coefficients $(\sspcoef(y), \hatsspcoef(y))$, where
	\begin{align}\label{eq:ALMMSSPcoef}
		\sspcoef(y) = \sup \bigl\{ r \ge 0 : \text{monotonicity conditions
		\eqref{eq:ALMMMonCond} hold with } \hat{r} = y r \bigr\}
	\end{align}
	and $\hatsspcoef(y) = y \, \sspcoef(y)$.
\end{definition}
\noindent As in Section~\ref{sec:DLMM}, it is clear that whenever the set in \eqref{eq:ALMMSSPcoef} is empty then
the method is non-SSP; in such cases we say the method has SSP coefficient equal to zero.

Define the vectors $\bm{a}_j,\bm{b}_j(x) \in \R^{p+1}$ as in \eqref{eq:DLMMVectora} and
\eqref{eq:DLMMVectorb}.
Then using the substitution
\begin{align}\label{eq:alpha_j}
	\gamma_j = \alpha_j - r\beta_j - \hat{r}\hat{\beta}_j \quad \text{for} \quad j \inset{0}{k-1},
\end{align}
the order conditions \eqref{eq:ALMMOrderCond} can be expressed in terms of vectors
$\bm{a}_j,\bm{ b}_j(x) $:
\begin{subequations}\label{eq:ALMMOrderCondVectorForm}
	\begin{align}
		\sum_{j=0}^{k-1} (\gamma_j + \hat{r}\hat{\beta}_j)\bm{a}_j + 
		\sum_{j=0}^k r\beta_j \bm{b}_j(r^{-1}) = \bm{a}_k,
		\label{eq:ALMMOrderCondVectorForma} \\
		\sum_{j=0}^{k-1} (\gamma_j + r\beta_j) \bm{a}_j + 
		\sum_{j=0}^k \hat{r}\hat{\beta}_j \bm{b}_j(\hat{r}^{-1}) = \bm{a}_k.
		\label{eq:ALMMOrderCondVectorFormb}
	\end{align}
\end{subequations}
The above equations suggest a change of variables.
Instead of considering the method's coefficients in terms of the column vectors
\begin{align*}
	\bm{\alpha} = (\alpha_0, \dots, \alpha_{k-1})^\intercal, \quad
	\bm{\beta} = (\beta_0, \dots, \beta_k)^\intercal, \quad
	\bm{\hat{\beta}} = (\hat{\beta}_0, \dots, \hat{\beta}_k)^\intercal,
\end{align*}
and the order conditions independent of $r$ and $\hat{r}$, one can consider the coefficients 
$\bm{\gamma}, \bm{\beta}, \bm{\hat{\beta}}$ under the substitution \eqref{eq:alpha_j}.
Let $\hat{r} = yr$. Then the order conditions can be written as functions of $r$.
In particular the system of $p+1$ equations \eqref{eq:ALMMOrderCondVectorForma} can be written as
$A(r)\bm{x}(r) = \bm{a}_k$, 
where
\begin{align*}
	A(r) = 
	\def\arraystretch{2.5}
	\left[\begin{array}{c|c|c|c|c|c|c}
		\bm{a}_0 & \dots & \bm{a}_{k-1} & r\bm{b}_0(r^{-1}) & \dots & r\bm{b}_{k-1}(r^{-1}) &  
		 r\bm{b}_k(r^{-1}) \\[1em]
	\end{array}\right]
\end{align*}
and $\bm{x}(r) = \bigl(\bm{\delta}(r),  \bm{\beta}\bigr)^\intercal \in \R^{2k+1}$ with
$\delta_j(r) = \gamma_j + y r \hat{\beta}_j$, $j \inset{0}{k-1}$.
Define the feasible set
\begin{align}\label{eq:ALMMFeasibilityProgram}
	P(r) = \{\bm{x} \in \R^{2k+1}: A(r)\bm{x}(r) = \bm{a}_k, \; \bm{x}(r) \ge 0 \}.
\end{align}
For a given $y$, if there exists a $k$-step, $p$-order accurate SSP \almm \eqref{eq:ALMM}
with SSP coefficient $\sspcoef(y)$, then $P\bigl(\sspcoef(y)\bigr)$ is non-empty.

Since we would like to obtain the method with the largest possible SSP coefficient, then for a fixed 
$k \ge 1$, $p \ge 1$ and a given $y$, we define
\ifpaper
	\begin{align*}
		\sspcoef_{k,p}(y) = \sup_{\bm{\alpha},\bm{\beta},\bm{\hat{\beta}}} \bigl\{ \sspcoef(y) > 0 : \;
		&\sspcoef(y) \text{ is the SSP coefficient of a} \\[-10pt]
		&\text{$k$-step method \eqref{eq:ALMM} of order $p$} \bigr\}.
	\end{align*}
\else
	\begin{align*}
		\sspcoef_{k,p}(y) = \sup_{\bm{\alpha},\bm{\beta},\bm{\hat{\beta}}} \bigl\{ \sspcoef(y) > 0 :
		\sspcoef(y)
		\text{ is the SSP coefficient of a $k$-step method \eqref{eq:ALMM} of order $p$} \bigr\}.
	\end{align*}
\fi
\begin{definition}\label{def:ALMMOptimal}
	Given $y$, an SSP  $k$-step \almm \eqref{eq:ALMMMonCond} of order $p$ is called
	\emph{optimal} if the order conditions \eqref{eq:ALMMOrderCond} are satisfied and
	$\sspcoef(y) = \sspcoef_{k,p}(y)$.
\end{definition}
\begin{theorem}\label{thm:ALMMNonzeroCoeff}
	Let $k \ge 1$, $p \ge 1$ be given such that $0 < \sspcoef_{k,p}(y) < \infty$ for a given $y$.
	Then there exists a $k$-step optimal SSP \almm \eqref{eq:ALMM} of order $p$ with at most $p$
	non-zero coefficients $\delta_j, \beta_i$, where $\delta_j = \alpha_j - \sspcoef_{k,p}(y)\beta_j$,
	$j \inset{0}{k-1}$ and $i \inset{0}{k}$.
\end{theorem}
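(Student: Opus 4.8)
The plan is to transcribe the proof of Theorem~\ref{thm:DLMMNonzeroCoeff} into the additive setting; the one genuinely new ingredient is the presence of the \emph{second} order-condition system \eqref{eq:ALMMOrderCondVectorFormb} and of the auxiliary coefficients $\bm{\hat\beta}$, which the single system $A(r)\bm{x}=\bm{a}_k$ does not see. First I would fix an optimal SSP \almm with coefficients $(\bm{\alpha},\bm{\beta},\bm{\hat\beta})$, set $r=\sspcoef_{k,p}(y)$ and $\hat r=yr$ (the monotonicity conditions \eqref{eq:ALMMMonCond} hold at these values since $\sspcoef_{k,p}(y)<\infty$ and the feasible region is closed), and pass to $\delta_j=\alpha_j-r\beta_j=\gamma_j+\hat r\hat\beta_j\ge 0$. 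Then $\bm{x}=(\bm{\delta},\bm{\beta})^\intercal\in\R^{2k+1}$ is non-negative and solves $A(r)\bm{x}=\bm{a}_k$, i.e.\ \eqref{eq:ALMMOrderCondVectorForma}, while $\bm{\hat\beta}$ is a non-negative solution of \eqref{eq:ALMMOrderCondVectorFormb} with $\hat r\hat\beta_j\le\delta_j$. A preliminary remark I would record is that subtracting \eqref{eq:ALMMOrderCondVectorFormb} from \eqref{eq:ALMMOrderCondVectorForma} leaves precisely $\sum_{j=0}^{k}(\beta_j-\hat\beta_j)\bm{a}'_j=0$; since the $\bm{a}'_j$ span the $p$-dimensional space $\{0\}\times\R^p$, any modification of $(\bm{\delta},\bm{\beta})$ that keeps \eqref{eq:ALMMOrderCondVectorForma} can in principle be completed to a full \almm by a choice of $\bm{\hat\beta}$ solving \eqref{eq:ALMMOrderCondVectorFormb}, as long as the sign constraints $\hat r\hat\beta_j\le\delta_j$ (equivalently $\gamma_j\ge 0$) remain satisfiable.

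Next I would run the dichotomy of Theorem~\ref{thm:DLMMNonzeroCoeff}. Assume for contradiction that $\bm{x}$ has at least $p+1$ nonzero entries, and let $S\subset\R^{p+1}$ be the corresponding columns of $A(r)$, namely the vectors $\bm{a}_i$ attached to the nonzero $\delta_i$ and the vectors $\bm{b}_i(r^{-1})$ attached to the nonzero $\beta_i$. If $S$ spans $\R^{p+1}$, pick a basis among its elements, solve the associated basic variables as smooth functions of $r$ from $A(r)\bm{x}=\bm{a}_k$ with the remaining entries frozen, and perturb $r\mapsto r+\epsilon$; by continuity the basic variables stay strictly positive, producing $(\bm{\delta}^*,\bm{\beta}^*)\ge 0$ that satisfy \eqref{eq:ALMMOrderCondVectorForma} at $r+\epsilon$, and completing with an admissible $\bm{\hat\beta}^*$ as above yields a $k$-step SSP \almm of order $p$ with SSP coefficient $r+\epsilon>\sspcoef_{k,p}(y)$, a contradiction. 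Hence $S$ is linearly dependent, so it lies in a subspace of dimension at most $p$.

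In the explicit case $\beta_k=\hat\beta_k=0$, and the first order condition gives $\sum_j\delta_j+r\sum_j\beta_j=1$, so $\bm{a}_k\in\text{conv}(S)$ and a Carath\'{e}odory argument (Lemma~\ref{lem:DLMMConvexity}(b), applied inside the span of $S$) expresses $\bm{a}_k$ as a combination of at most $p$ linearly independent vectors of $S$; in the implicit case one takes $\beta_k>0$, divides \eqref{eq:ALMMOrderCondVectorForma} by $1+r\beta_k$, and uses Lemma~\ref{lem:DLMMConvexity}(a) within the span of $S$. Discarding the unused coefficients leaves $(\bm{\delta}',\bm{\beta}')\ge 0$ with at most $p$ nonzero entries still solving \eqref{eq:ALMMOrderCondVectorForma}, and supplying a compatible $\bm{\hat\beta}'$ as in the preliminary remark gives an optimal \almm of order $p$ whose coefficients $\delta_j,\beta_i$ have at most $p$ nonzero entries, which is the claim.

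The hard part will be exactly the compatibility step invoked twice above: after perturbing or reducing $(\bm{\delta},\bm{\beta})$ one must recover non-negative $\bm{\hat\beta}$ and $\bm{\gamma}$ at the same time, and the naive choice $\bm{\hat\beta}=\bm{\beta}$ forces $\alpha_j\ge(r+\hat r)\beta_j$, which need not hold a priori. I expect this to be handled either by carrying out the Carath\'{e}odory reduction and the $r$-perturbation inside the polytope cut out by \eqref{eq:ALMMOrderCondVectorForma} together with the inequalities $\delta_j\ge\hat r\beta_j$ --- after first verifying that an optimal \almm can be normalized to lie in it --- or by a joint continuity argument on $(\bm{\delta},\bm{\beta},\bm{\hat\beta})$ that never reconstructs $\bm{\hat\beta}$ from scratch. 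Everything else is a line-by-line adaptation of the proof of Theorem~\ref{thm:DLMMNonzeroCoeff}.
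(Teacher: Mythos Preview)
Your proposal follows the paper's proof essentially line for line: pass to $\bm{x}=(\bm{\delta},\bm{\beta})$, write the first block of order conditions as $A(r)\bm{x}=\bm{a}_k$, and run the span/non-span dichotomy exactly as in Theorem~\ref{thm:DLMMNonzeroCoeff}. The one place where you and the paper diverge is the completion step for $\bm{\hat\beta}$. The paper does not carry out either of the two strategies you sketch (restricting to the polytope cut out by $\delta_j\ge\hat r\beta_j$, or a joint continuity argument on $(\bm{\delta},\bm{\beta},\bm{\hat\beta})$); instead, in the spanning case it takes precisely the choice you called ``naive'', setting $\hat\beta_j^*=\beta_j^*$, and asserts that ``for each index $j$ in $\bm{x}^*$ such that $\delta_j^*>0$, we can choose $\gamma_j^*$ so that $\beta_j^*=\hat\beta_j^*$'', after which \eqref{eq:ALMMOrderCondVectorFormb} follows automatically. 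The inequality $\gamma_j^*=\delta_j^*-y(r+\epsilon)\beta_j^*\ge 0$ that you flag is not verified explicitly in the paper, and in the non-spanning (Carath\'eodory) branch the reconstruction of $\bm{\hat\beta}$ is not discussed at all. So your identification of this step as the one genuinely new ingredient is accurate, and your draft is already at least as careful as the published argument; the paper's resolution is simply to accept $\hat\beta^*=\beta^*$ without the additional safeguards you propose.
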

\begin{proof}
	Let $k \ge 1$, $p \ge 1$ and $y$ be given.
	Consider an optimal $k$-step SSP \almm \eqref{eq:ALMM} of order $p$ with SSP coefficient
	$\sspcoef_{k,p}(y) > 0$.
	Define $\gamma_j = \alpha_j - \sspcoef_{k,p}(y)\beta_j - \hat{\sspcoef}_{k,p}(y)\hat{\beta}_j$ and 
	$\delta_j = \gamma_j + \hat{\sspcoef}_{k,p}(y)\hat{\beta}_j$ for  $j \inset{0}{k-1}$.
	Then the vector $\bm{x} = (\bm{\delta}, \bm{\beta})^\intercal \in \R^{2K+1}$ belongs to the feasible
	set \eqref{eq:ALMMFeasibilityProgram} when $r = \sspcoef_{k,p}(y)$.
	
	Suppose $\bm{x}$ has at least $p+1$ non-zero coefficients and	 let $S$ be the set of columns of the
	matrix $A(r)$ in \eqref{eq:ALMMFeasibilityProgram} corresponding to the non-zero elements of $x$.
	We distinguish two cases.
	First, assume that the set $S$ does not span $\R^{p+1}$.
	Then, similarly to the proof of Theorem~\ref{thm:DLMMNonzeroCoeff}, $x$ consists of at most $p$
	non-zero elements.
	If now $S$ spans $\R^{p+1}$, let	$\bm{x}_p = (\bm{x}_B, \bm{x}_N)^\intercal$ be a permutation of
	$\bm{x}$ such that $\bm{x}_B^\intercal \in \R^{p+1}$ is a strictly positive vector and
	$\bm{x}_N^\intercal \in \R^{2k- p}$ is non-negative.
	We can permute the columns of $A(r)$ in \eqref{eq:ALMMFeasibilityProgram} in the same way, yielding
	$A_p(r) = [B(r) \;|\; N(r)]$, where $B \in \R^{(p+1) \times (p+1)}$ and
	$N \in \R^{(p+1) \times (2k- p)}$.
	Again, following the reasoning of the proof of Theorem~\ref{thm:DLMMNonzeroCoeff}, there exists
	$\epsilon > 0$ such that
	$\bm{x}_p^* = (\bm{x}_B(\sspcoef_{k,p}(y) + \epsilon), \bm{x}_N)^\intercal$ is a permutation of
	$\bm{x}^* = (\bm{\delta}^*, \bm{\beta}^*)^\intercal$ that solves
	$A(\sspcoef_{k,p}(y) + \epsilon) \bm{x} = \bm{a}_k$.
	
	Moreover, for each index $j$ in $\bm{x}^*$ such that $\delta^*_j > 0$, we can choose $\gamma^*_j$ so
	that $\beta^*_j = \hat{\beta}^*_j$.
	Then, $\bm{x}^*$ satisfies \eqref{eq:ALMMOrderCondVectorFormb} as well.
	But this contradicts to the optimality of the method since we have constructed a $k$-step
	SSP \almm of order $p$ with coefficients given by $\bm{x}^*$ and SSP coefficient
	$\sspcoef_{k,p}(y) + \epsilon$.
\end{proof}
\begin{lemma}\label{lem:BetasEquality}
	For a given $k \ge 1, p \ge 1$ an optimal \almm \eqref{eq:ALMM} has $\beta_j = \hat{\beta}_j$
	for all $j \inset{0}{k}$.
\end{lemma}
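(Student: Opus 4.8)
The plan is to exploit the fact that the order conditions \eqref{eq:ALMMOrderCond} constrain $\bm{\beta}$ and $\bm{\hat{\beta}}$ only through a single shared vector, which forces their relevant ``moments'' to coincide. First I would rewrite \eqref{eq:ALMMOrderCond} in the compact form
\begin{align*}
	\sum_{j=0}^{k-1}\alpha_j\bm{a}_j + \sum_{j=0}^{k}\beta_j\bm{a}'_j = \bm{a}_k
	\qquad\text{and}\qquad
	\sum_{j=0}^{k-1}\alpha_j\bm{a}_j + \sum_{j=0}^{k}\hat{\beta}_j\bm{a}'_j = \bm{a}_k,
\end{align*}
which is legitimate because the zeroth component of $\bm{a}'_j$ vanishes, so the consistency row $\sum_j\alpha_j=1$ is common to both systems. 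Subtracting the two identities gives
\begin{align*}
	\sum_{j=0}^{k}\beta_j\bm{a}'_j = \sum_{j=0}^{k}\hat{\beta}_j\bm{a}'_j = \bm{a}_k - \sum_{j=0}^{k-1}\alpha_j\bm{a}_j.
\end{align*}

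Next, given an optimal \almm with coefficients $(\bm{\alpha},\bm{\beta},\bm{\hat{\beta}})$ and SSP coefficient $\sspcoef = \sspcoef_{k,p}(y)$, I would keep $\bm{\alpha}$ fixed and replace the pair $(\beta_j,\hat{\beta}_j)$ by their $y$-weighted average
\begin{align*}
	\beta_j^* = \hat{\beta}_j^* = \frac{\beta_j + y\hat{\beta}_j}{1+y}, \qquad j \inset{0}{k}
\end{align*}
(read as $\beta_j^*=\hat{\beta}_j^*=\beta_j$ when $y=0$, where also $\hatsspcoef(y)=0$). Then I would verify three points. (i) $\beta_j^*,\hat{\beta}_j^*\ge 0$, being a convex combination of the non-negative $\beta_j,\hat{\beta}_j$. (ii) The order conditions still hold, since
\begin{align*}
	\sum_{j=0}^{k}\beta_j^*\bm{a}'_j = \frac{1}{1+y}\Bigl(\sum_{j=0}^{k}\beta_j\bm{a}'_j + y\sum_{j=0}^{k}\hat{\beta}_j\bm{a}'_j\Bigr) = \sum_{j=0}^{k}\beta_j\bm{a}'_j
\end{align*}
by the identity from the first step, and $\hat{\beta}_j^*=\beta_j^*$ makes the two families of order conditions \eqref{eq:ALMMOrderCond} identical. (iii) The monotonicity conditions \eqref{eq:ALMMMonCond} are unchanged: using $\hat{r}=yr$,
\begin{align*}
	\alpha_j - r\beta_j^* - \hat{r}\hat{\beta}_j^* = \alpha_j - r(1+y)\beta_j^* = \alpha_j - r(\beta_j + y\hat{\beta}_j) = \alpha_j - r\beta_j - \hat{r}\hat{\beta}_j,
\end{align*}
so the quantity $\gamma_j$ of \eqref{eq:alpha_j} is unaffected for every $r\ge 0$. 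Consequently the new method has exactly the same SSP coefficient $\sspcoef_{k,p}(y)$, hence is again optimal, and it satisfies $\beta_j=\hat{\beta}_j$ for all $j$.

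Modulo the bookkeeping in points (i)--(iii), this is essentially a one-line averaging argument once the vector form of the order conditions is available; the substantive work was already done in Theorem~\ref{thm:ALMMNonzeroCoeff}, and the lemma simply isolates its consequence. The only things requiring mild care are the degenerate endpoint $y=0$ (where the weight $y/(1+y)$ and $\hatsspcoef(y)$ both vanish, and the statement should be read with $\hat{r}=0$), and the precise meaning of ``an optimal \almm has $\beta_j=\hat{\beta}_j$'': the construction shows that from any optimal method one obtains an optimal method of this form, so one loses no generality in assuming it --- which is exactly what is needed to conclude that optimal SSP additive methods coincide with ordinary SSP LMMs and therefore offer no larger monotonicity step size. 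I do not expect a genuine obstacle beyond this routine verification.
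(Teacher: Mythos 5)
Your proof is correct, but it takes a genuinely different route from the paper's. The paper first invokes Theorem~\ref{thm:ALMMNonzeroCoeff} to pass to an optimal method with at most $p$ non-zero coefficients, then subtracts the two families of order conditions to get $\sum_{i\in I}v_i\bm{\bar{a}}_i=0$ with $v_i=\beta_i-\hat{\beta}_i$ supported on at most $p$ distinct indices, and concludes $\bm{v}=0$ from the linear independence of the Vandermonde-type vectors $\bm{\bar{a}}_i=(1,i,\dots,i^{p-1})^\intercal$. You instead give a self-contained averaging argument: since both order-condition families constrain $\bm{\beta}$ and $\bm{\hat{\beta}}$ through the same linear map $\bm{\beta}\mapsto\sum_j\beta_j\bm{a}'_j$, and since the monotonicity constraints \eqref{eq:ALMMMonCond} along the ray $\hat{r}=yr$ depend only on the combination $\beta_j+y\hat{\beta}_j$, the $y$-weighted average $\beta_j^*=\hat{\beta}_j^*=(\beta_j+y\hat{\beta}_j)/(1+y)$ produces another method with the same order, the same $\gamma_j$, and hence the same SSP coefficient. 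Your version is more elementary (it needs neither the sparsity theorem nor the Vandermonde step) and makes transparent \emph{why} additivity buys nothing here; the paper's version yields the nominally stronger conclusion that the sparse optimal representative itself satisfies $\beta_j=\hat{\beta}_j$. Note, though, that both arguments really prove the existence of an optimal method with $\beta_j=\hat{\beta}_j$ rather than the literal ``every optimal method'' reading of the statement --- the paper's proof silently restricts to the representative furnished by Theorem~\ref{thm:ALMMNonzeroCoeff} --- and you are right that the existence version is all that Theorem~\ref{thm:OptimalALMM} requires. Your handling of the endpoint $y=0$ is also fine.
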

\begin{proof}
	Consider an optimal method \eqref{eq:ALMM} of order $p$.
	From Theorem~\ref{thm:ALMMNonzeroCoeff} at most $p$ coefficients $\delta_j, \beta_i$,
	$j \inset{0}{k-1}$, $i \inset{0}{k}$ are non-zero.
	Let $\bm{v} = \bm{\beta} - \bm{\hat{\beta}}$, then $\bm{v}$ has at most $p$ non-zero elements.
	Subtracting the order conditions \eqref{eq:ALMMOrderCondVectorForm} results in
	\begin{align*}
		\sum_{i \in I} v_i  \bm{\bar{a}}_i = 0,
	\end{align*}
	where $I$ is the set of distinct indices for which $v_i$'s are non-zero.
	The vectors $\bm{\bar{a}}_i = (1, i, \dots, \allowbreak i^{p-1})^\intercal$, $i \in I$ are linearly
	independent (see \cite[Chapter~21]{Higham:2006:Accuracy&StabilityNumAlg}), therefore $\bm{v}$ must
	be identically equal to zero.
	Hence, $\beta_j = \hat{\beta}_j$ for all $j \inset{0}{k}$.
\end{proof}
The main result of this section relies on Theorem~\ref{thm:ALMMNonzeroCoeff} and
Lemma~\ref{lem:BetasEquality}.
\begin{theorem}\label{thm:OptimalALMM}
	For a given $k \ge 1, p \ge 1$ an optimal \almm with SSP coefficient $\sspcoef_{k,p}$ and
	corresponding SSP coefficient $\hatsspcoef_{k,p}$ is equivalent to the optimal $k$-step optimal SSP
	LMM \eqref{eq:LMM} of order $p$ with SSP coefficient $\sspcoef_{k,p} + \hatsspcoef_{k,p}$.
\end{theorem}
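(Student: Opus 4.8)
The plan is to derive the statement as a bookkeeping consequence of Lemma~\ref{lem:BetasEquality} and Theorem~\ref{thm:ALMMNonzeroCoeff}; no new estimates are required. First I would apply Lemma~\ref{lem:BetasEquality} to an optimal \almm \eqref{eq:ALMM} of order $p$ (with $k$ steps, for the fixed value of $y$), so that we may assume $\beta_j = \hat{\beta}_j$ for every $j \inset{0}{k}$. With this identification the method \eqref{eq:ALMM} becomes
\[
	\u_n = \sum_{j=0}^{k-1}\alpha_j\u_{n-k+j} + \Dt\sum_{j=0}^{k}\beta_j\bigl(\F(\u_{n-k+j}) + \hatF(\u_{n-k+j})\bigr),
\]
which is exactly the ordinary LMM \eqref{eq:LMM} with coefficients $(\bm{\alpha},\bm{\beta})$ applied to the combined right-hand side $\F + \hatF$. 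Moreover, imposing $\beta_j = \hat{\beta}_j$ collapses the two parallel families of order conditions in \eqref{eq:ALMMOrderCond} into the single family of order-$p$ conditions for a $k$-step LMM, so the reduced method has order $p$.

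Next I would match the SSP coefficients. With $\beta_j = \hat{\beta}_j$ and $\hat{r} = yr$, the monotonicity inequality in \eqref{eq:ALMMMonCond}, namely $\alpha_j - r\beta_j - \hat{r}\hat{\beta}_j \ge 0$, becomes $\alpha_j - (1+y)r\,\beta_j \ge 0$. Evaluating at the optimal values $r = \sspcoef_{k,p}$, $\hat{r} = \hatsspcoef_{k,p} = y\,\sspcoef_{k,p}$ gives $\alpha_j \ge (\sspcoef_{k,p} + \hatsspcoef_{k,p})\beta_j$ for all $j$, i.e.\ the reduced ordinary LMM is SSP with SSP coefficient at least $\sspcoef_{k,p} + \hatsspcoef_{k,p}$. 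Hence the largest SSP coefficient attainable by a $k$-step LMM of order $p$, call it $\sspcoef_{\text{LMM}}$, satisfies $\sspcoef_{\text{LMM}} \ge \sspcoef_{k,p} + \hatsspcoef_{k,p}$.

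For the reverse inequality I would start from an optimal $k$-step, order-$p$ SSP LMM \eqref{eq:LMM} with coefficients $(\bm{\alpha},\bm{\beta})$ and SSP coefficient $\sspcoef_{\text{LMM}} = \min_j\alpha_j/\beta_j$, and build an \almm from it by keeping $\bm{\alpha}$ and taking $\beta_j = \hat{\beta}_j$ equal to the LMM's $\beta_j$. Both families of order conditions in \eqref{eq:ALMMOrderCond} then reduce to those of the underlying LMM, so this additive method has order $p$; and for the given $y$ the choice $r = \sspcoef_{\text{LMM}}/(1+y)$, $\hat{r} = yr$ satisfies \eqref{eq:ALMMMonCond}, so it is SSP with $\sspcoef(y) + \hatsspcoef(y) = (1+y)r = \sspcoef_{\text{LMM}}$. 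Consequently $\sspcoef_{k,p} + \hatsspcoef_{k,p} \ge \sspcoef_{\text{LMM}}$, and combining the two bounds yields $\sspcoef_{k,p} + \hatsspcoef_{k,p} = \sspcoef_{\text{LMM}}$ (in particular this sum is independent of $y$). Together with the first paragraph — where the optimal additive method, after imposing $\beta_j = \hat{\beta}_j$, was shown to be an order-$p$ $k$-step LMM for $\F + \hatF$ whose SSP coefficient equals $\sspcoef_{k,p} + \hatsspcoef_{k,p} = \sspcoef_{\text{LMM}}$ — this exhibits the reduced LMM as an \emph{optimal} SSP LMM, which is the asserted equivalence.

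The only place that needs a little care is this passage between the additive and non-additive settings: one must check that the \emph{pair} of order-condition families in \eqref{eq:ALMMOrderCond} genuinely merges into a single LMM order-condition family precisely when $\beta_j = \hat{\beta}_j$, and keep straight which quantity ($\sspcoef_{k,p}$, $\hatsspcoef_{k,p}$, or their sum) plays the role of the LMM SSP coefficient. All of the substantive analysis has already been carried out in Theorem~\ref{thm:ALMMNonzeroCoeff} and Lemma~\ref{lem:BetasEquality}, so this theorem is essentially their corollary.
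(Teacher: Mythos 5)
Your proposal is correct and follows essentially the same route as the paper: reduce to $\beta_j = \hat{\beta}_j$ via Lemma~\ref{lem:BetasEquality} and read off $\min_j \alpha_j/\beta_j = \sspcoef_{k,p} + \hatsspcoef_{k,p}$ from the monotonicity conditions. In fact you are slightly more complete than the paper's own proof, which asserts without argument that the reduced LMM is \emph{optimal} among $k$-step order-$p$ LMMs, whereas your reverse construction (building an additive method with $\beta_j=\hat{\beta}_j$ from the optimal LMM and choosing $r=\sspcoef_{\text{LMM}}/(1+y)$) supplies exactly the missing inequality $\sspcoef_{k,p}+\hatsspcoef_{k,p} \ge \sspcoef_{\text{LMM}}$.
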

\begin{proof}
	Consider an optimal method \eqref{eq:ALMM} of order $p$ with SSP coefficient $\sspcoef_{k,p}$ and
	$\hatsspcoef_{k,p} = y\,\sspcoef_{k,p}$ for some $y \in [0,\infty)$.
	From Lemma~\ref{lem:BetasEquality} we have $\beta_j = \hat{\beta}_j$ for all $j$, therefore
	monotonicity conditions \eqref{eq:ALMMMonCond} yield
	$\min_j \frac{\alpha_j}{\beta_j} = \sspcoef_{k,p} + \hatsspcoef_{k,p}$.
	Thus the \almm is equivalent to the optimal $k$-step SSP LMM method of order $p$ with SSP coefficient
	$\sspcoef_{k,p} + \hatsspcoef_{k,p}$.
\end{proof}

\subsection{Monotone IMEX \class{\lmm}}
Based on Theorem~\ref{thm:OptimalALMM}, it is only interesting to consider Implicit-Explicit (IMEX) SSP
\class{\lmm}.
Such methods are particularly useful for initial value problems \eqref{eq:ALMM} where $\F$ represents a
non-stiff or mild stiff part of the problem, and $\hatF$ a stiff term for which implicit integration is
required.
The following theorem provides sufficient conditions for monotonicity for the numerical solution
of an IMEX method.
\begin{theorem}\label{thm:IMEXMonotonicity}
	Consider the additive problem \eqref{eq:AddODE} for which $\F$ and $\hatF$ satisfy 
	\eqref{eq:ALMMDiffFE}, for some  $\DtFE > 0$ and  $\hatDtFE > 0$.
	Let an IMEX LMM \eqref{eq:ALMM} with coefficients $\beta_k = 0$, $\hat{\beta_k} \ne 0$ be
	strong-stability-preserving with SSP coefficients $(\sspcoef(y),\hatsspcoef(y))$ for
	$y = \DtFE/\hatDtFE$.
	Then, the numerical solution satisfies the monotonicity condition \eqref{eq:Monotonicity} under a
	step-size restriction $\Dt \leq \min\{\sspcoef \,\DtFE,\hatsspcoef \, \hatDtFE\}$.
\end{theorem}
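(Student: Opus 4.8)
The plan is to adapt the proof of Theorem~\ref{thm:DLMMMonotonicity}, with two changes: the downwind contribution $-\tilde\beta_j\tildeF$ is replaced by the additive contribution $+\hat\beta_j\hatF$ (so every forward-Euler step that appears carries a ``$+$'' sign, which only simplifies matters), and the single implicit stage --- present because $\hat\beta_k\ne0$ while $\beta_k=0$ --- is absorbed by the standard cancellation trick. Throughout I may assume $\sspcoef(y)>0$, hence $\hatsspcoef(y)=y\,\sspcoef(y)>0$, since otherwise the restriction $\Dt\le\min\{\sspcoef\DtFE,\hatsspcoef\hatDtFE\}$ forces $\Dt\le0$ and there is nothing to prove; note also that, because $\hatsspcoef=y\,\sspcoef$ with $y=\DtFE/\hatDtFE$, the two quantities $\sspcoef\DtFE$ and $\hatsspcoef\hatDtFE$ in fact coincide.

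First I would set $\alpha_k:=\sspcoef\beta_k+\hatsspcoef\hat\beta_k=\hatsspcoef\hat\beta_k$ (using $\beta_k=0$) and add $\alpha_k\u_n$ to both sides of \eqref{eq:ALMM}. Using the SSP inequalities \eqref{eq:ALMMMonCond} at $r=\sspcoef$, $\hat r=\hatsspcoef$, I would split $\alpha_j=\bar\alpha_j+\hat\alpha_j$ for $j=0,\dots,k-1$ with $\bar\alpha_j:=\sspcoef\beta_j$, so that $\hat\alpha_j:=\alpha_j-\sspcoef\beta_j\ge\hatsspcoef\hat\beta_j\ge0$. This lets me rewrite the right-hand side as
\begin{align*}
(1+\alpha_k)\u_n ={}& \sum_{j=0}^{k-1}\bar\alpha_j\Bigl(\u_{n-k+j}+\frac{\Dt}{\sspcoef}\F(\u_{n-k+j})\Bigr) + \sum_{j=0}^{k-1}\hat\alpha_j\Bigl(\u_{n-k+j}+\Dt\frac{\hat\beta_j}{\hat\alpha_j}\hatF(\u_{n-k+j})\Bigr) \\
&{} + \alpha_k\Bigl(\u_n+\frac{\Dt}{\hatsspcoef}\hatF(\u_n)\Bigr),
\end{align*}
a combination whose coefficients sum to $\sum_{j=0}^{k-1}\alpha_j+\alpha_k=1+\alpha_k$ by consistency, with the convention that a zero coefficient annihilates its bracket so the formally undefined ratios are harmless.

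Then I would take norms, apply the triangle inequality, and invoke $\Dt\le\min\{\sspcoef\DtFE,\hatsspcoef\hatDtFE\}$, which gives $\Dt/\sspcoef\le\DtFE$ and $\Dt\,\hat\beta_j/\hat\alpha_j\le\Dt/\hatsspcoef\le\hatDtFE$; hence the forward Euler conditions \eqref{eq:ALMMDiffFEa}--\eqref{eq:ALMMDiffFEb} bound each bracket by the norm of its base point, yielding $(1+\alpha_k)\|\u_n\|\le\sum_{j=0}^{k-1}\alpha_j\|\u_{n-k+j}\|+\alpha_k\|\u_n\|$. The $\alpha_k\|\u_n\|$ terms cancel --- this is the only place the implicitness is used, and it works precisely because $\beta_k=0$ so that no $\F(\u_n)$ term (which could not be controlled by the forward Euler condition) is produced --- leaving $\|\u_n\|\le\sum_{j=0}^{k-1}\alpha_j\|\u_{n-k+j}\|\le\max_{0\le j\le k-1}\|\u_{n-k+j}\|$ by consistency, which is \eqref{eq:Monotonicity}. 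The argument is otherwise routine; the only real care points are the bookkeeping that makes the convex-combination weights sum to $1+\alpha_k$ and the cancellation of $\alpha_k\|\u_n\|$, which is the sole structural difference from the explicit case.
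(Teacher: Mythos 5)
Your proof is correct and takes essentially the same approach the paper intends: the paper's own proof of this theorem consists solely of the remark that it is ``similar to that of Theorem~\ref{thm:DLMMMonotonicity}'', and your argument is exactly that adaptation, with the splitting $\alpha_j=\sspcoef\beta_j+\hat{\alpha}_j$ mirroring the decomposition used there and the implicit term correctly absorbed by adding $\alpha_k\u_n=\hatsspcoef\hat{\beta}_k\u_n$ to both sides and cancelling $\alpha_k\|\u_n\|$ at the end. Your observation that $\beta_k=0$ is precisely what prevents an uncontrollable $\F(\u_n)$ term from appearing is the right structural point, and the degenerate case $\sspcoef(y)=0$ is handled appropriately.
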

\begin{proof}
	The proof is similar to that of Theorem~\ref{thm:DLMMMonotonicity}.
\end{proof}

As in the Section~\ref{sec:DLMM}, the minimum step size in \eqref{thm:IMEXMonotonicity} occurs when
$\sspcoef \,\DtFE = \hatsspcoef \, \hatDtFE$.
For a given $k \ge 1$ and $p \ge 1$, we would like to find the largest possible value $\sspcoef_{k,p}(y)$
such that an optimal IMEX method is SSP with coefficients
$(\sspcoef_{k,p},\sspcoef_{k,p} \,\DtFE/\hatDtFE)$.
Setting $y := \DtFE/\hatDtFE$ and combining the inequalities \eqref{eq:ALMMMonCond} and the
order conditions \eqref{eq:ALMMOrderCond} we can form the following optimization problem:
\begin{gather}
	\max_{\{\bm{\gamma},\bm{\beta},\bm{\hat{\beta}},r\}} \; r, \qquad \text{subject to}
	\nonumber\\[10pt]
	\left\{\arraycolsep=1.4pt\def\arraystretch{2.2}
	\begin{array}{ll}\label{eq:IMEXOptPrb}
		\displaystyle\sum_{j=0}^{k-1} \gamma_j + r(\beta_j + y\hat{\beta}_j) = 1, \quad
		\displaystyle\sum_{j=0}^{k-1}\bigl(\gamma_j + r(\beta_j + y\hat{\beta}_j)\bigr) j +
		\beta_j = k, & \\
		\displaystyle\sum_{j=0}^{k-1}\bigl(\gamma_j + r(\beta_j + y\hat{\beta}_j)\bigr) j^i +
		\beta_j ij^{i-1} = k^i, & i \inset{2}{p}, \\
		\displaystyle\sum_{j=0}^{k-1} (\beta_j - \hat{\beta}_j) - \hat{\beta}_k = 0, \quad
		\displaystyle\sum_{j=0}^{k-1} (\beta_j - \hat{\beta}_j) j^i - \hat{\beta}_k k^i = 0,
		& i \inset{1}{p-1}, \\
		\gamma_j \ge 0, \beta_j \ge 0, & j \inset{0}{k-1}, \\
		\hat{\beta}_j \ge 0, & j \inset{0}{k}, \\
		r \ge 0.
    \end{array}\right.
\end{gather}
By using bisection in $r$, the optimization problem \eqref{eq:IMEXOptPrb} can be viewed as a sequence of
linear feasible problems, as suggested in \cite{Ketcheson:2009:OptimalMonotonicityGLM}.
We solved the above problem using \texttt{linprog} in Matlab and found optimal IMEX SSP methods for
$k \inset{1}{40}, p \inset{1}{15}$ and for different values of $y$.
Similarly to \class{\ark} \cite{Higueras:2006:ARK}, we can define the feasibility SSP region of IMEX SSP
methods for a fixed $k \ge 1$ and $p \ge 1$ by
\begin{align*}
	R_{k,p} = \bigl\{ (r, \hat{r}) : y \in \R^+ \text{ and monotonicity conditions \eqref{eq:ALMMMonCond}
	hold for } r \ge 0 , \hat{r}= ry \bigr\}.
\end{align*}
For instance, the feasibility SSP regions for three-step, second-order and six-step, fourth-order IMEX
methods are shown in Figure~\ref{fig:IMEX}.

\begin{figure}[!ht]
	\centering
	\subfigure[Three-step, second-order IMEX SSP region]
	{\includegraphics[width=0.48\textwidth]{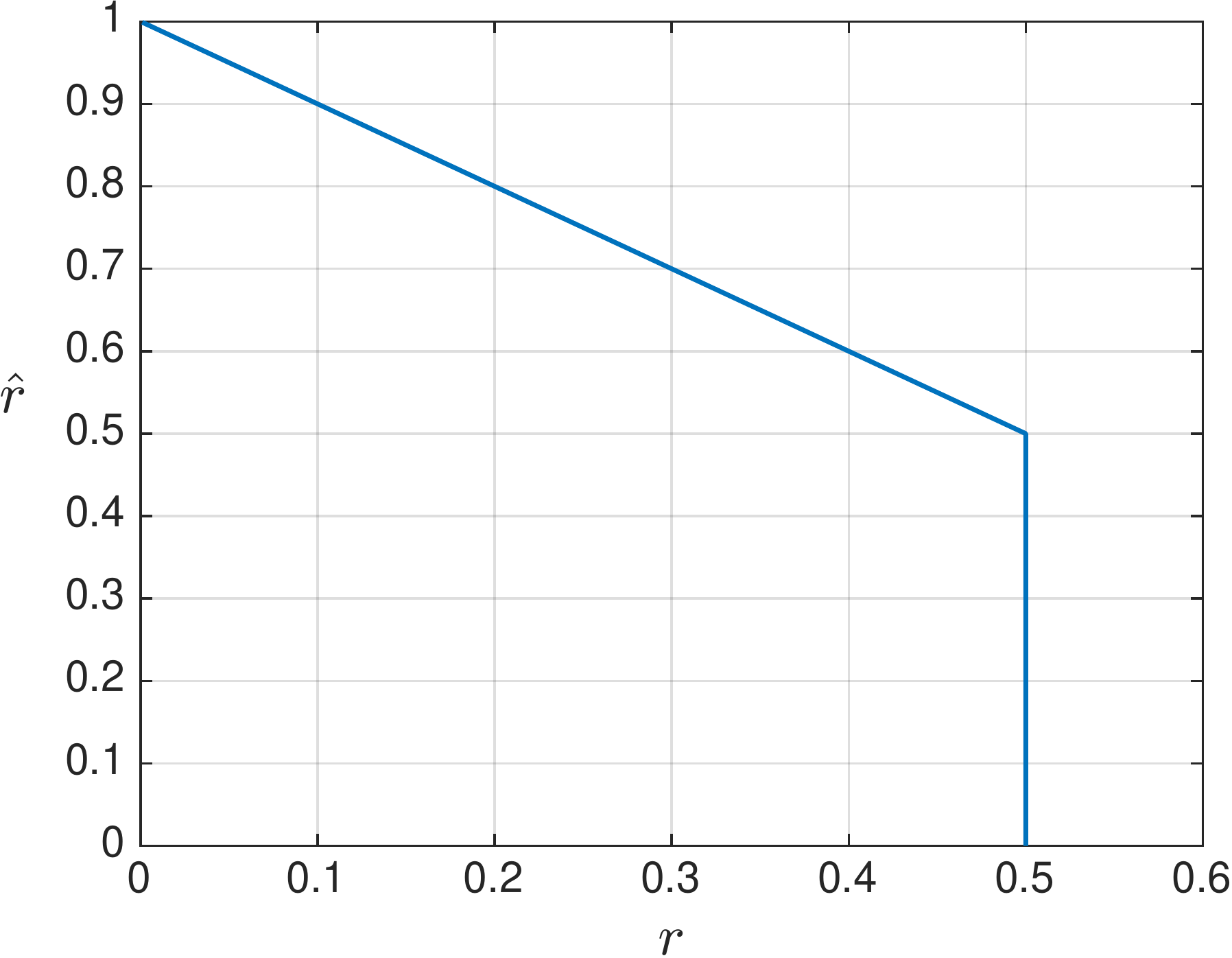}}\label{fig:IMEXa}
	\quad
	\subfigure[Six-step, fourth-order IMEX SSP region]
	{\includegraphics[width=0.48\textwidth]{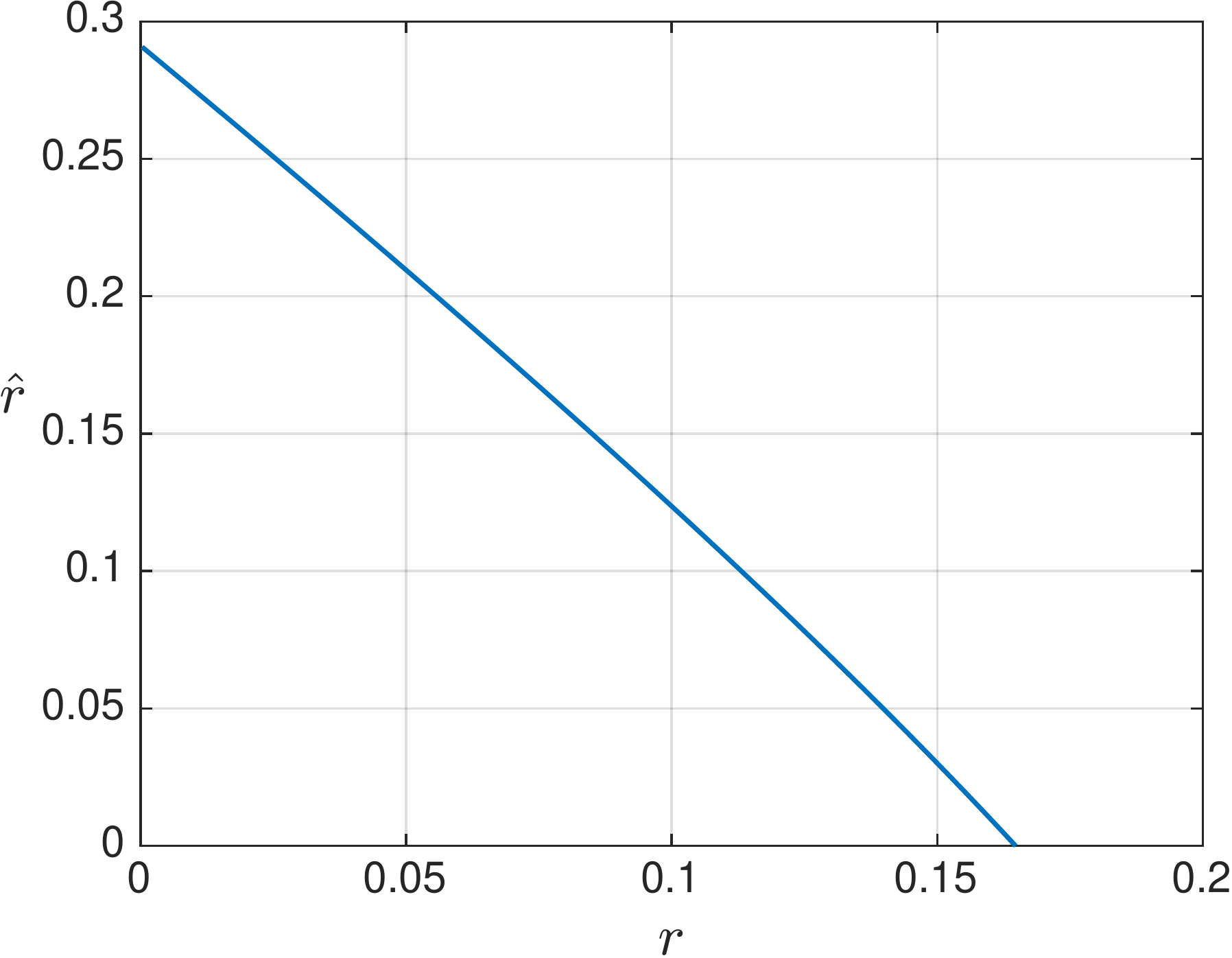}}\label{fig:IMEXb}
	\caption{SSP regions of IMEX LMMs.
	Consider the half-line starting from the origin with a slope $y$ with the $r$-axis.
	Then, the intersection of the line $\hat{r} = y r$, $y >0$ with the boundary of the SSP region
	corresponds to an optimal IMEX LMM with SSP coefficient $\sspcoef(y)$.}
    \label{fig:IMEX}
\end{figure}

As mentioned in \cite[Section~2.1]{Hundsdorfer/Ruuth:2007:IMEX-LMM} the SSP coefficients of IMEX SSP
methods in the case the forward Euler ratio $y = \DtFE/\hatDtFE$ is equal to one are not large.
The same seems to hold when considering SSP IMEX methods for additive problems \eqref{eq:AddODE}
satisfying \eqref{eq:ALMMDiffFE} for any values $y \ge 0$ (see Figure~\ref{fig:IMEX}).
Thus, instead of requiring both parts of an IMEX method to be SSP, one can impose SSP conditions only on
the explicit part and optimize stability properties for the implicit method.
Second order methods among this class of methods have been studied in \cite{Gjesdal:2007:2nd0rderIMEX},
whereas in \cite{Ruuth/Hundsdorfer:2005:GeneralMonotonicityBoundednessLMM} higher order IMEX methods with
optimized stability features were constructed based on general monotonicity and boundedness properties
of the explicit component.

\section{Conclusion and future work}\label{sec:Conclusion}
We have investigated a generalization of the \class{\lmm} with upwind- and downwind-biased operators
introduced in \cite{Shu:1988:TVD,Shu/Osher:1988:ENO}, by considering problems in which the downwind
operator satisfies a forward Euler condition with different step-size restriction than that of the upwind
operator.
We expressed the \class{\plmm} in an additive form and analyzed their monotonicity
properties.  By optimizing in terms of the upwind and downwind Euler step sizes,
methods with larger SSP step sizes are obtained for such problems.
We studied additive problems in the same framework, and we have shown that
when both parts of the method are explicit (or both parts are implicit),
the optimal additive SSP methods lie within the class of traditional
(non-additive) SSP linear multistep methods.
Finally, we have seen that IMEX SSP methods for additive problems allow 
relatively small monotonicity-preserving step sizes.

The concepts of additive splitting and downwind semi-discretization can be
combined to yield downwind IMEX LMMs of the form (applying downwinding
to the non-stiff term):
\ifpaper
	\begin{align}\label{eq:IMEX}
	\begin{split}
		\u_n =& \sum_{j=0}^{k-1}\alpha_j\u_{n-k+j} +
		\Dt\sum_{j=0}^{k-1}\left(\beta_j\F(\u_{n-k+j}) - \tilde{\beta}_j\tildeF(\u_{n-k+j})\right) + \\
		&\Dt\sum_{j=0}^k\hat{\beta}_j\hatF(\u_{n-k+j}),
	\end{split}
	\end{align}
\else
	\begin{align}\label{eq:IMEX}
		\u_n = \sum_{j=0}^{k-1}\alpha_j\u_{n-k+j} +
		\Dt\sum_{j=0}^{k-1}\left(\beta_j\F(\u_{n-k+j}) - \tilde{\beta}_j\tildeF(\u_{n-k+j})\right) +
		\Dt\sum_{j=0}^k\hat{\beta}_j\hatF(\u_{n-k+j}),
	\end{align}
\fi
where $\F$ and $\tildeF$ satisfy the forward Euler conditions \eqref{eq:DLMMDiffFE} and the explicit part
is an SSP \plmm\unskip.
Preliminary results show that it is possible to obtain second order IMEX \class{\lmm} with two or three
steps, where the implicit part is A-stable and the explicit part is an optimal SSP \plmm\unskip.
This generalization allows the construction of new IMEX methods with fewer steps for a given order of
accuracy and with larger SSP coefficients (for the explicit component).
Moreover, the best possible IMEX method can be chosen based on the ratio of forward Euler step sizes of
the non-stiff term in \eqref{eq:AddODE}.
Also, it is worth investigating the possibility of obtaining A($\alpha$)-stable implicit parts
whenever A-stability is not feasible.
Work on optimizing the stability properties of the IMEX methods \eqref{eq:IMEX} is ongoing and will be
presented in a future work.
Analysis of SSP \class{\plmm} with variable step sizes and monotonicity properties of \class{\plmm} with
special starting procedures can also be studied.

\subsection*{Acknowledgment}
The authors would like to thank Lajos L{\'o}czi and Inmaculada Higueras for carefully reading this paper
and making valuable suggestions and comments.

\section*{Appendix}
\appendix
\section{Proofs of Lemmata in Section~\ref{sec:DLMM}}\label{appx:ProofLemmas}
In this section we present the proofs of some technical lemmata that were omitted in the previous
sections.
\begin{proof}[Proof of Lemma~\ref{lem:DLMMConvexity}]
	Consider a set of distinct vectors $S = \{\bm{x}_1, \dots, \bm{x}_m\}$ in $\R^n$.
	Let a non-zero vector $\bm{y} \in C$ be given.
	Then there exist non-negative coefficients $\lambda_j$ that sum to unity such that
	\begin{align*}
		\bm{y} = \sum_{j=1}^{m} \lambda_j x_j.
	\end{align*}
	If $\bm{x}_1, \dots, \bm{x}_m$ are linearly independent, it must be that $m \le n$ and both parts (a)
	and (b) of the lemma hold trivially.
	Therefore, assume the vectors in $S$ are linearly dependent.
	Then, we can find $\mu_j$ not all zero and at least one which is positive, such that
	\begin{align*}
		\sum_{j=1}^{m} \mu_j \bm{x}_j = 0.
	\end{align*}
	Define
	\begin{align*}
		\nu = \min_{1 \le j \le m}\left\{\frac{\lambda_j}{\mu_j} : \mu_j > 0\right\} =
			\frac{\lambda_{j_0}}{\mu_{j_0}};
	\end{align*}
	then we have $\nu\mu_j \le \lambda_j$ for all $j \inset{1}{m}$, where equality holds for at least
	$j= j_0$.
	Let $\tilde{\lambda}_j = \lambda_j - \nu\mu_j$ for $j \inset{1}{m}$.
	By the choice of $\nu$, all coefficients $\tilde{\lambda}_j$ are non-negative and at least one of
	them is equal to zero.
	Note that
	\begin{align*}
		\bm{y} = \sum_{j=1}^{m} \lambda_j \bm{x}_j - \nu\sum_{j=1}^{m} \mu_j \bm{x}_j =
			\sum_{j=1}^{m} \tilde{\lambda}_j\bm{x}_j,
	\end{align*}
	hence $\bm{y}$ can be expressed as a non-negative linear combination of at most $m-1$ vectors in $S$.
	The above argument can be repeated until $\bm{y}$ is written as a non-negative linear combination of
	$\bm{x}_1, \dots, \bm{x}_r$ linearly independent vectors, where $r \le n$.
	This proves part (a).

	For part (b), suppose $\bm{x}_1, \dots, \bm{x}_m$ are linearly dependent and belong in
	$\{(1,\bm{v}) : \bm{v} \in \R^{n-1}\}$.
	Then, any non-zero vector $\bm{y} \in C$ has the form $(1,\bm{v})^\intercal$, $\bm{v} \in \R^{n-1}$
	and from part (a) can be written as a non-negative combination of at most $n$ linearly independent
	vectors in $S$ with coefficients $\tilde{\lambda}_j$.
	In addition $\sum_{j=1}^{m} \tilde{\lambda}_j = 1$, since the first component of all $\bm{x}_j$ and
	$\bm{y}$ is one.
\end{proof}
\begin{proof}[Proof of Lemma~\ref{lem:ALMMStepSize}]
	Let $p_i(u;\epsilon_i) := u + \epsilon_i f_i(u)$, then we have
	\begin{align*}
		f_i(u) = \frac{p_i(u;\epsilon_i) - u}{\epsilon_i}, \quad \text{for } i \inset{1}{n}.
	\end{align*}
	Using $\sum_{i=1}^n \epsilon/\epsilon_i = 1$ and the assumption of the lemma, it can be shown that
	\begin{align*}
		\|u + \epsilon f(u)\|
		&= \left\|u + \sum_{i=1}^n\frac{\epsilon}{\epsilon_i}\left(p_i(u;\epsilon_i)-u\right)\right\| \\
		&= \left\|\sum_{i=1}^n\frac{\epsilon}{\epsilon_i}p_i(u;\epsilon_i)\right\| \\
		&\le \sum_{i=1}^n\frac{\epsilon}{\epsilon_i}\|u\| = \|u\|.
	\end{align*}
	The rest of the proof relies on \cite[Lemma~II.5.1]{Martin:1976:OperatorsAndDiffEqnsBanachSpaces}.
	If $0 \le \tau < \epsilon$, then there exist $0< \rho < 1$ such that $\tau = (1 - \rho)\epsilon$.
	Then
	$u + \tau f(u) = u + (1 - \rho)\epsilon f(u) = \rho u + (1 - \rho)\left(u + \epsilon f(u)\right)$
	and hence
	\begin{align*}
		\|u + \tau f(u)\| - \|u\|&\le \rho\|u\| + (1 - \rho)\|u + \epsilon f(u)\| - \|u\| \\
		&= \left(1 - \rho\right)\left(\|u + \epsilon f(u)\| - \|u\|\right) \\
		&\le \|u + \epsilon f(u)\| - \|u\|.
	\end{align*}
	This implies that $\|u + \tau f(u)\| \le \|u + \epsilon f(u)\|$, whenever $0 \le \tau \le \epsilon$
	and the result follows.
\end{proof}

\ifpaper

	\bibliographystyle{amsplain}
	\bibliography{bibliography}

\else

	\sloppy
	\printbibliography

\fi

\end{document}